\documentclass{article}
\usepackage[utf8]{inputenc}
\usepackage[T1]{fontenc}
\usepackage{amssymb,amsmath,amsthm,mathrsfs}
\usepackage[english]{babel}
\usepackage{mathtools}
\usepackage{mathrsfs}
\usepackage{bm}
\usepackage{enumerate}
\usepackage{authblk}
\usepackage{graphicx}
\usepackage{subcaption}
\usepackage{comment}
\usepackage[margin=0.3in]{geometry}
\newgeometry{includefoot,left=3cm,right=3cm, bottom=3cm,top=3cm}
\usepackage{color}

\newcommand{\E}{\mathbb{E}}
\newcommand{\p}{\mathbb{P}}
\newcommand{\UH}{\mathbb{H}}
\newcommand{\C}{\mathbb{C}}
\newcommand{\R}{\mathbb{R}}
\newcommand{\D}{\mathbb{D}}

\newcommand{\Z}{\mathbb{Z}}
\newcommand{\dist}{\mathrm{dist}}
\newcommand{\SLE}{\mathrm{SLE}}
\newcommand{\CLE}{\mathrm{CLE}}

\newcommand{\strip}{\mathscr{S}}

\newcommand{\one}{\mathbf{1}}

\renewcommand{\epsilon}{\varepsilon}

\newcommand{\G}{\mathscr{G}}

\newcommand{\wt}{\widetilde}
\newcommand{\wh}{\widehat}
\newcommand{\cont}{\mathrm{Cont}}

\newcommand{\im}{\mathrm{Im}}
\newcommand{\FH}{\mathfrak{H}}
\newcommand{\Fh}{\mathfrak{h}}
\newcommand{\area}{\mathrm{Area}}
\newcommand{\CM}{\mathcal{M}}
\newcommand{\CW}{\mathcal{W}}

\newcommand{\hcap}{\mathrm{hcap}}
\newcommand{\Rad}{\mathrm{R}}
\newcommand{\Lat}{\mathrm{L}}
\newcommand{\Var}{\mathrm{Var}}

\newcommand{\ol}{\overline}

\newcommand{\argmax}{\arg \max}

\newcommand{\qwedgeA}[2]{\mathsf{QWedge}_{\bm{\gamma}=#1}^{\bm{\alpha}=#2}}

\newtheorem{thm}{Theorem}[section]
\newtheorem{lem}[thm]{Lemma}

\theoremstyle{definition}
\newtheorem{defin}[thm]{Definition}
\newtheorem{rmk}[thm]{Remark}

\begin{document}

\title{A Gaussian free field approach to the natural parametrisation of $\SLE_4$}

\author{Vlad Margarint and Lukas Schoug}

\date{\today}
\maketitle

\parindent 0 pt
\setlength{\parskip}{0.20cm plus1mm minus1mm}


\begin{abstract}
We construct the natural parametrisation of $\SLE_4$ using the Gaussian free field, complementing the corresponding results for $\SLE_\kappa$ for $\kappa \in (0,4)$ by Benoist \cite{benoist2018natural} and for $\kappa \in (4,8)$ by Miller and Schoug \cite{ms2022volume}.
\end{abstract}

\section{Introduction}
The Schramm-Loewner evolution, $\SLE_\kappa$ ($\kappa > 0$), introduced in \cite{schramm2000scaling} is a one-parameter family of random conformally invariant curves which arise as the scaling limits of statistical mechanics models in two dimensions. When constructing $\eta \sim \SLE_\kappa$ via the Loewner differential equation (see Section~\ref{sec:SLE}), the curve is parametrised by capacity, that is, if $K_t$ is the set of points in $\UH$ which are separated from $\infty$ by $\eta([0,t])$, then $\hcap(K_t) = 2t$ for each $t \geq 0$. However, there are other parametrisations which may be equally natural for $\SLE$. One example is the so-called \emph{natural parametrisation} of $\SLE$, constructed in \cite{ls2011natural} for $\kappa < 4(7-\sqrt{33})$ and then in \cite{lz2013natural} for all $\kappa < 8$. This is the parametrisation which is conjectured to arise when considering $\SLE_\kappa$ as a scaling limit of a discrete model where the discrete interface is parametrised by the number of vertices it traverses (so far this has only been proved to be the case when considering the scaling limit of loop-erased random walk \cite{lv2021convergence} and critical site percolation on the triangular lattice \cite{hls2022percolation}). It turns out that the natural parametrisation of $\SLE_\kappa$ is in fact (a deterministic constant times) the $d_\kappa$-dimensional Minkowski content of $\SLE_\kappa$, where $d_\kappa = 1 +\kappa/8$ is the almost sure Hausdorff dimension of $\SLE_\kappa$ \cite{beffara2008dimension}.

In \cite{benoist2018natural} the natural parametrisation of $\SLE_\kappa$ for $\kappa \in (0,4)$ was constructed using the Gaussian free field (GFF). More precisely, it was constructed as a conditional expectation of a certain quantum length measure on $\SLE_\kappa$. The same was done in \cite{ms2022volume} in the case of $\kappa \in (4,8)$, when the curves are non-simple. The goal of this paper is to complete this picture by providing the corresponding construction in the case $\kappa = 4$. The difference in this case compared to $\kappa \neq 4$, is that in order to cut and weld Liouville quantum gravity (LQG) surfaces with $\eta \sim \SLE_4$, one must choose the LQG parameter $\gamma = 2$, which is the critical value. When working with critical LQG, the calculations turn out to be a bit more cumbersome (due to the factor containing the logarithm in the definition, see~\eqref{eq:2LQG_length}) and one has to be extra careful, as the LQG boundary length of a (Euclidean) bounded interval no longer has finite expectation.

For a simply connected domain $D$, denote by $r_D(z)$ the conformal radius of $D$ as seen from $z \in D$.

\begin{thm}\label{thm:mainresult}
Let $\eta \sim \SLE_4$ and $(f_t)$ be its centred Loewner chain. Let $h^0$ be a zero-boundary GFF on $\UH$ independent of $\eta$ and for each $t > 0$, define $h^t = h^0 \circ f_t^{-1} + 2 \log | (f_t^{-1})'|$. We define the measure $\mu^0$ on $\eta$ by
\begin{align*}
    \mu^0|_{\eta([0,t])}(dz) = F(z) \E[ \nu_{h^t} \, | \, \eta] \circ f_t(dz),
\end{align*}
where $F(z) = r_\UH(z)^{-1/2}$ and $\nu_{h^t}$ is the critical LQG boundary length measure with respect to $h^t$ (see~\eqref{eq:2LQG_length}). Then, there exists a (deterministic) constant $C > 0$ such that a.s.\ $C \mu^0$ is the natural parametrisation of $\eta$.
\end{thm}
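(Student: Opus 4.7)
The proof plan follows the template of Benoist~\cite{benoist2018natural} and Miller--Schoug~\cite{ms2022volume}: identify $\mu^0$ with a deterministic constant multiple of the $d_4 = 3/2$-dimensional Minkowski content of $\eta$, which in turn equals the natural parametrisation up to a universal constant. Concretely, I would verify that $\mu^0$ is an a.s.\ locally finite, $\sigma(\eta)$-measurable random measure supported on $\eta$, which is conformally covariant with exponent $3/2$, and whose first moment has the correct form. Combined with a uniqueness characterisation of the natural parametrisation analogous to the ones used in the cited works, these properties force $C \mu^0$ to equal the natural parametrisation of $\eta$ a.s.\ for some deterministic $C > 0$.

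In more detail, the main steps are as follows. First, establish well-definedness: the critical $\gamma = 2$ LQG boundary length $\nu_{h^t}$ is a.s.\ locally finite on $\R$, so the pullback $\nu_{h^t} \circ f_t$ is a.s.\ a locally finite measure on $\eta([0,t])$, and $\E[\nu_{h^t} \mid \eta]$ makes rigorous sense as a random measure; weighting by $F(z)$ gives $\mu^0|_{\eta([0,t])}$, and consistency as $t$ varies follows from the Markov property of $h^0$ together with the structure of the Loewner chain. Second, prove conformal covariance: for a M\"obius automorphism $\phi$ of $\UH$, the critical quantum boundary length is invariant under the shift $h \mapsto h \circ \phi^{-1} + Q\log|(\phi^{-1})'|$ with $Q = 2$ (this is precisely why the construction lives at the critical value $\gamma = 2$), while $F(z) = r_\UH(z)^{-1/2}$ transforms with factor $|\phi'|^{-1/2}$ since $r_\UH(\phi(z)) = |\phi'(z)| r_\UH(z)$. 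Combining the $|\phi'|^2$ scaling of boundary length with this $|\phi'|^{-1/2}$ weight correction yields the desired $|\phi'|^{3/2}$ transformation law, matching $d_4$. Third, compute the leading order of $\E[\mu^0(B(z,r) \cap \eta)]$ as $r \to 0$ via distortion estimates for $f_t$ near $\eta$ and an explicit calculation of critical LQG boundary expectations on small intervals, and match it to the one-point function of $\mathrm{Mink}_{3/2}(\eta)$. Finally, invoke the uniqueness characterisation to conclude.

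The principal obstacle is the singular behaviour of critical LQG. The critical boundary measure satisfies $\E[\nu_h(I)] = +\infty$ for every non-degenerate interval $I \subset \R$, so the direct Fubini-style calculations of \cite{benoist2018natural} cannot be imported verbatim; although $\E[\nu_{h^t} \mid \eta]$ is a.s.\ a locally finite measure, one cannot freely interchange it with unconditional expectation. I expect to handle this either by truncation (restricting to events where the GFF regularisation is bounded, applying subcritical-style arguments at each truncation level, and taking monotone limits) or by approximation from below $\gamma \uparrow 2$, using Benoist's result for each subcritical $\gamma$ together with the convergence of the appropriately renormalised $\gamma$-LQG boundary measures to the critical one. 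The logarithmic correction in the definition of $\nu_{h^t}$ introduces additional delicate integrability issues, and the blow-up of $|(f_t^{-1})'|$ near the tip $\eta(t)$ produces a boundary layer that must be controlled; the weight $F(z) = r_\UH(z)^{-1/2}$ is precisely what renders $\mu^0$ locally finite in this borderline situation, and obtaining the correct form of the first-moment estimate in the presence of this weight and the log correction is, I expect, the crux of the argument.
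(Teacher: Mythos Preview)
Your outline follows the same template as the paper (consistency, conformal covariance, local finiteness, uniqueness), but there is a mismatch in which uniqueness criterion you are targeting and a genuine gap in how you propose to handle the critical case.

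The uniqueness criterion the paper invokes (Theorem~\ref{thm:uniqueness_natural_parametrisation}, from Lawler--Sheffield) requires only that $\mu^0$ be a.s.\ locally finite on $\eta$ and that $(\mu^t,\eta^t)\overset{d}{=}(\mu^0,\eta^0)$ for all $t\geq 0$, where $\mu^t$ is the unzipped measure~\eqref{eq:unzipped_measure}. No matching of first moments with the Minkowski one-point function is needed, so your step~3 is unnecessary --- and, as you yourself note, problematic since unconditional expectations of critical boundary length diverge. The covariance that matters is under the \emph{unzipping maps} $\psi_s=f_s^{-1}$, not general M\"obius automorphisms; the paper obtains it (Lemma~\ref{lem:conformal_covariance}) by coupling $h^0$ with a zero-boundary GFF in the slit domain via the GFF Markov decomposition, and the exponent $3/2$ emerges from the variance of the harmonic correction $\FH$, not from a direct LQG coordinate-change heuristic.

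The real crux is local finiteness, and here your proposal is missing the paper's key idea. The paper uses neither truncation nor a $\gamma\uparrow 2$ limit. Instead it embeds $h^0$ inside a $(2,1)$-quantum wedge $h^w=h^0+\Fh$ (Lemma~\ref{lem:wedge_decomposition}), exploits the invariance of the wedge law under $\SLE_4$ unzipping (Holden--Powell~\cite{hp2021welding}), and bounds $\E[\nu_{h^{w,t}}(f_t(\D\cap\UH))^p]$ for a sufficiently \emph{small} $p\in(0,1)$ (Lemmas~\ref{lem:moment_finite} and~\ref{lem:moment_bound_strip}). The point is that while first moments of critical boundary length are infinite, fractional $p$th moments are finite and controllable via the radial/lateral decomposition on the strip together with the $\SLE_4$ one-arm exponent. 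From this one gets a.s.\ local finiteness on compacta of $\UH$; the intensity form $\E[\mu^0](dz)=c\sin(\arg z)\,\im(z)^{-1/2}\,dz$ is then obtained indirectly from scaling and absolute continuity (Lemmas~\ref{lem:conformal_covariance_intensity}--\ref{lem:density_intensity}), and local finiteness on $\eta$ follows. Your $\gamma\uparrow 2$ suggestion in particular would require coupling $\SLE_\kappa$ to $\SLE_4$ as $\kappa\uparrow 4$ simultaneously with the associated measures, which is substantially harder than the quantum-wedge/fractional-moment route the paper actually takes.
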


\subsection*{Related work}
There have been many results concerning natural measures on random fractals. We already mentioned the results on the natural parametrisation of $\SLE$ in \cite{ls2011natural,lz2013natural,lr2015minkowski,benoist2018natural,ms2022volume}. Moreover, in \cite{ms2022volume}, a natural conformally covariant measure on $\CLE_\kappa$, $\kappa \in (8/3,8)$, was constructed and proved to be unique (up to multiplicative constant). Moreover, in \cite{cl2022natural}, natural conformally covariant measures on several fractals (cut points and boundary touching points of $\SLE_\kappa$, $\kappa > 4$, $\CLE$ pivotal points and carpet/gasket) were constructed and in \cite{kms2023nonsimple} the natural measure on cut points of $\SLE_\kappa$ for $\kappa > 4$ was studied further and proved to have bounded moments.

Another related result is the construction of a family of random measures on the so-called two-valued sets (TVS) of the GFF, carried out in \cite{ssv2020tvsdim}. This is done similarly to the measures above, using the imaginary multiplicative chaos (that is, a version of LQG where the real parameter $\gamma$ is replaced by $i \sigma$ for some $\sigma \in (0,\sqrt{2})$) and it was shown that if the conformal Minkowski contents of the TVS exist, then they are equal (up to deterministic constants) to the constructed measures.

\subsection*{Outline}
Section~\ref{sec:preliminaries} contains the necessary preliminaries and in Section~\ref{sec:natural_measure} we prove the main theorem. The latter section begins with the proof of the consistency of the definition of $\mu^0$ for different $t > 0$, after which the rest of the section is divided into two subsections. In Section~\ref{sec:local_finiteness_UH} we prove that $\mu^0$ is almost surely locally finite as a measure on $\UH$ and in Section~\ref{sec:conformal_covariance} we prove that $\mu^0$ is conformally covariant and use this together with the local finiteness of $\mu^0$ on $\UH$ to deduce the local finiteness of $\mu^0$ as a measure on $\SLE_4$. The main result then follows.

\subsection*{Notation}
For any quantities $a,b$, we write $a \lesssim b$ to mean $a \leq C b$ for some constant $C > 0$ which does not depend on any of the parameters of interest. Moreover, we write $a \gtrsim b$ if $b \lesssim a$. Finally, we write $a \asymp b$ if $a \lesssim b$ and $a \gtrsim b$.

We denote by $\R$ the real line and $\C$ the complex plane. Moreover, we write $\UH$ for the upper half-plane $\{z \in \C\colon \im(z) > 0\}$ and $\D$ for the unit disk $\{ z \in \C\colon |z| < 1 \}$.

We denote two-dimensional Lebesgue measure by $dz$.

\subsection*{Acknowledgements}
We thank Hao Wu for valuable input and an anonymous referee for helpful comments. LS was supported by the Finnish Academy Centre of Excellence FiRST and the ERC starting grant 804166 (SPRS). 

\section{Preliminaries}
\label{sec:preliminaries}

\subsection{Random measures}
\label{sec:random_measures}
A random measure is a random element in a space of measures on some Borel space, in our case $\C$. For a random measure $\xi$, we define the intensity $\E[\xi]$ of $\xi$ by $\E[\xi](A) = \E[\xi(A)]$ for all Borel sets $A \subset \C$. Moreover, for a $\sigma$-algebra $\G$, we define the conditional intensity of $\xi$ given $\G$ to be the random measure given by $\E[\xi \, | \, \G] (A) = \E[\xi(A) \, | \, \G]$ for each Borel set $A \subset \C$. 
For more on random measures, see \cite{kallenberg2017rmbook} (note that while they require local finiteness as a part of the definition, this property will be shown to hold for the measure we construct).

\subsection{Schramm-Loewner evolution}
\label{sec:SLE}
Consider the Loewner differential equation
\begin{align}\label{eq:LDE}
    \partial_t g_t(z) = \frac{2}{g_t(z) - W_t}, \quad g_0(z) = z,
\end{align}
with $W_t = \sqrt{\kappa} B_t$ where $\kappa > 0$ and $B_t$ is a standard Brownian motion. There exists a solution $g_t(z)$ to~\eqref{eq:LDE} for each time $t \in [0,T_z)$ where $T_z = \inf\{ t \geq 0\colon g_t(z) - W_t = 0 \}$. We let $K_t = \{ z \in \UH\colon T_z \leq t \}$ and note that $(g_t)_{t \geq 0}$ is a family of conformal maps $g_t\colon \UH \setminus K_t \to \UH$ such that $\lim_{z \to \infty} g_t(z) - z = 0$, called the $\SLE_\kappa$ Loewner chain.  By \cite{rs2005basic} (for $\kappa \neq 8$) and \cite{lsw2004lerwust} (for $\kappa = 8$) there exists a continuous curve $\eta$ such that $K_t$ is the set of points separated from infinity by $\eta([0,t])$. This curve $\eta$ is an $\SLE_\kappa$ in $\UH$ from $0$ to $\infty$. The behaviour of $\eta$ is heavily influenced by the value of $\kappa$: if $\kappa \leq 4$, $\eta$ is a.s.\ simple, if $\kappa \in (4,8)$, $\eta$ a.s.\ intersects its past as well as the boundary and if $\kappa \geq 8$, the  $\eta$ is a.s.\ space-filling. It turns out that $\eta$ has a.s.\ Hausdorff dimension $d_\kappa = 1 + \kappa/8$ for $\kappa \in (0,8)$, see \cite{beffara2008dimension}.

Denote by $(f_t)_{t \geq 0}$ the centred Loewner chain of $\eta \sim \SLE_\kappa$, that is, for each $t \geq 0$ and all $z \in \UH$, $f_t(z) = g_t(z) - W_t$. An important property of $\SLE$ is that the law of $\eta$ is scale invariant and satisfies the domain Markov property, that is, for any a.s.\ finite stopping time $\tau$, the law of $\eta^\tau(u) = f_\tau(\eta(\tau+u))$ is $\SLE_\kappa$.


\subsubsection{Natural parametrisation of $\SLE$}
We now recall some basic facts about the natural parametrisation of $\SLE$. The natural parametrisation of $\SLE$ was constructed in \cite{ls2011natural,lz2013natural} and in \cite{lr2015minkowski} shown to be equal to (a deterministic constant times) the Minkowski content of $\SLE$ which they also proved exists (we shall discuss briefly what this means below). We recall that the $d$-dimensional Minkowski content of a set $A \subset \UH$ is (if it exists) defined as the limit
\begin{align*}
    \cont_d(A) = \lim_{r \to 0} r^{d-2} \area(\{z \in \UH\colon \dist(z,A) \leq r \}).
\end{align*}
For $\eta \sim \SLE_\kappa$ we let $\CM$ denote the $d_\kappa$-dimensional Minkowski content of $\eta$ (where $d_\kappa = 1 + \kappa/8$), that is, $\CM$ is the measure $\CM(D) = \cont_{d_\kappa}(\eta \cap D)$. Typically, the Minkowski content of $\SLE$ is viewed as a parametrisation of $\eta$, that is, one can parametrise $\eta$ so that $\CM(\eta([s,t])) = t - s$ for all $0 < s < t$.

Furthermore, it is proved that if we denote by $(f_t)$ the centred Loewner chain for $\eta$, then
\begin{align*}
    &\E[ \CM(D) ] = \int_D G_\kappa(z) dz, \quad \E[ \CM(D)^2 ] = \iint_{D^2} G_\kappa(z,w) dz dw, \\
    &\E[ \CM(D) \, | \, \eta([0,t]) ] = \CM|_{\eta([0,t])}(D) + \int_D |f_t'(z)|^{2-d} G_\kappa(f_t(z)) dz,
\end{align*}
where $G_\kappa(z) = c_\kappa \sin^{\tfrac{8}{\kappa}-1}(\arg z) \im(z)^{d-2}$ and $c_\kappa > 0$ and
\begin{align*}
    G_\kappa(z,w) = \lim_{r \to 0} r^{2(d-2)} \p( \dist(z,\eta) \leq r, \ \dist(w,\eta) \leq r).
\end{align*}
Next, we recall a characterisation of the natural parametrisation which we will use to identify the measure that we will construct with the natural parametrisation. Again, let $(f_t)$ be the centred Loewner chain of $\eta \sim \SLE_\kappa$ and for each $t > 0$, let $\psi_t = f_t^{-1}$. For each $t > 0$, we define the ``unzipped'' curve $\eta^t$ by
\begin{align}\label{eq:unzipped_curve}
    \eta^t(u) = f_t(\eta(t+u)), \quad u > 0,
\end{align}
write $\eta^0 = \eta$ and recall that $\eta^t \sim \SLE_\kappa$. Moreover, for a $d_\kappa$-dimensional volume measure $\mu$ on $\eta$, we define
\begin{align}\label{eq:unzipped_measure}
    \mu^t(dz) = | \psi_t(z)|^{-d_\kappa} \mu
    \circ \psi_t(dz),
\end{align}
and note that $\mu^0 = \mu$. The following uniqueness result was proved in \cite{ls2011natural}.
\begin{thm}\label{thm:uniqueness_natural_parametrisation}
Fix $\kappa \in (0,8)$, set $d_\kappa = 1 + \kappa/8$, let $\eta \sim \SLE_\kappa$ and let $\mu$ be an a.s.\ locally finite measure on $\eta$. If $(\mu^t,\eta^t) \overset{d}{=} (\mu,\eta)$ for all $t \geq 0$, then there is a deterministic constant $C > 0$ such that $C \mu$ is the natural parametrisation of $\eta$.
\end{thm}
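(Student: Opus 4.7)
The plan is to identify $\mu$ with a deterministic multiple of the Minkowski content measure $\CM$ recalled above, which by \cite{lr2015minkowski} is (a deterministic multiple of) the natural parametrisation of $\eta$. Since $\CM$ itself satisfies $(\CM^t, \eta^t) \overset{d}{=} (\CM, \eta)$, the strategy is to compare $\mu$ and $\CM$ directly by exploiting their common stationarity via a martingale argument.

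First I would identify the intensity of $\mu$. Taking expectations in $\mu^t(D) \overset{d}{=} \mu(D)$ and inserting the definition of $\mu^t$ yields a functional equation for the density $\rho(z)\,dz := \E[\mu](dz)$ under conjugation by the conformal maps $\psi_t$. Combined with the scale invariance of $\SLE_\kappa$, this forces $\rho$ to satisfy the same conformal covariance of weight $2-d_\kappa$ as $G_\kappa$, and hence $\rho = C G_\kappa$ for some constant $C \geq 0$, so that $\E[\mu] = C \E[\CM]$.

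Next I would extract a martingale. The hypothesis, together with the domain Markov property of $\eta$, should promote the equality in distribution to the statement that $(\mu^t, \eta^t)$ is independent of $\mathcal{F}_t := \sigma(\eta|_{[0,t]})$. Taking conditional expectation of $\mu(D)$ given $\mathcal{F}_t$ and inserting the intensity identified above yields
\begin{align*}
\E[\mu(D) \mid \mathcal{F}_t] = \E\bigl[\mu|_{\eta([0,t])}(D) \,\big|\, \mathcal{F}_t\bigr] + C \int_D |f_t'(z)|^{2-d_\kappa} G_\kappa(f_t(z)) \, dz.
\end{align*}
Subtracting the analogous identity for $\CM$ shows that $M_t := \E[(\mu - C\CM)(D) \mid \mathcal{F}_t]$ is a mean-zero martingale with $M_\infty = \E[(\mu - C \CM)(D) \mid \sigma(\eta)]$. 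Running this through a countable base of open sets $D$ and using regularity of measures would yield $\E[\mu \mid \sigma(\eta)] = C \CM$ a.s.

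The main obstacle, as I see it, is the very last step: promoting identification of the conditional mean to almost-sure equality $\mu = C \CM$, since $\mu$ may carry randomness beyond $\eta$ (as it does in the GFF-based construction of Theorem~\ref{thm:mainresult}). This requires sharp control on the conditional variance $\Var[\mu(D) \mid \sigma(\eta)]$, which in turn demands estimates on the two-point intensity of $\mu$ extracted from the stationarity hypothesis together with the explicit form of $G_\kappa$ and the corresponding two-point function of $\CM$ from \cite{lr2015minkowski}. The proof in \cite{ls2011natural} addresses exactly this technical point.
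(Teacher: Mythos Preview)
The paper does not give its own proof of this theorem; it is simply quoted from \cite{ls2011natural}. So your sketch is being compared against a citation, not against an argument written out in the present paper.

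On the substance of your outline: the martingale strategy is correct in spirit and close to what \cite{ls2011natural} actually does. There the mechanism is the uniqueness of the Doob--Meyer decomposition. Once one knows that $t\mapsto \mu|_{\eta([0,t])}(D)$ and $t\mapsto \CM|_{\eta([0,t])}(D)$ are both continuous increasing processes compensating (up to the same constant) the supermartingale $\Psi_t(D)=\int_D |f_t'(z)|^{2-d_\kappa}G_\kappa(f_t(z))\,dz$, their difference is a continuous local martingale of bounded variation starting at $0$, hence identically zero. No conditional-variance estimate is needed; your proposed ``main obstacle'' is not the route taken.

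Two concrete issues with your write-up. First, the step $\E[\mu](dz)=\rho(z)\,dz$ presupposes absolute continuity of the intensity with respect to Lebesgue measure; this does not follow from the stationarity hypothesis alone and needs its own argument (in the present paper this is handled separately for the specific $\mu^0$, see Lemma~\ref{lem:absolute_continuity_lebesgue}). Second, and more importantly, your final paragraph rests on a misreading of Theorem~\ref{thm:mainresult}: the measure $\mu^0$ there is defined via the conditional expectation $\E[\nu_{h^t}\mid\eta]$, which integrates out the GFF and leaves a $\sigma(\eta)$-measurable measure. There is no randomness beyond $\eta$, so $\E[\mu\mid\sigma(\eta)]=\mu$ holds trivially and the gap you worry about does not arise. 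Indeed, the formulation in \cite{ls2011natural} is for processes adapted to the Loewner filtration, and that is how the theorem is applied here.
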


\subsection{Gaussian free field}
\label{sec:GFF}
Let $D$ be a Jordan domain and let $H_0(D)$ be the Hilbert space closure of the space $C_0^\infty(D)$ of smooth functions with support compactly contained in $D$, under the Dirichlet inner product
\begin{align*}
    (f,g)_\nabla = \frac{1}{2\pi} \int_D \nabla f(z) \cdot \nabla g(z) dz.
\end{align*}
Let $(\phi_n)_{n \geq 1}$ be an orthonormal basis of $H_0(D)$ and consider a sequence $(\alpha_n)_{n \geq 1}$ of i.i.d.\ $N(0,1)$ random variables. The zero-boundary GFF $h$ on $D$ is defined by the sum $h = \sum_{n \geq 1} \alpha_n \phi_n$. The law of $h$ does not depend on the choice of orthonormal basis and it is conformally invariant in the sense that if $\varphi\colon \wt{D} \to D$ is a conformal map, then $\wt{h} = h \circ \varphi$ is a zero-boundary GFF in $\wt{D}$.

The zero-boundary GFF satisfies a domain Markov property. Indeed, if $U \subset D$ is open, then we have the orthogonal decomposition $H_0(D) = H_0(U) \oplus H_0^\perp(U)$, where $H_0^\perp(U)$ is the space of functions $f \in H_0(D)$ which are harmonic on $U$. It follows that we can decompose $h$ as $h = h_U + h_U^\perp$, where $h$ is a zero-boundary GFF on $U$ and $h_U^\perp$ is a distribution which agrees with $h$ on $D \setminus U$, is harmonic on $U$ and is independent of $h_U$. We think of $h_U^\perp|_U$ as the harmonic extension of the the values of $h$ on $\partial U$ to $U$. With this in mind, one can also define a GFF with boundary data $F$ as $h+f$, where $f$ is the harmonic extension of $F$ to $D$.

One can, equivalently, define the zero-boundary GFF as a centred Gaussian process $h: H_0(D) \to L^2(D)$ with correlation kernel given by the Dirichlet Green's function for $D$. This means that $(h,f)$, $f \in H_0(D)$, is a collection of centred Gaussians with covariance given by $\E[(h,f) (h,g)] = \int_{D \times D} f(z) G_D(z,w) g(w) dz dw$, where $G_D$ is the Green's function on $D$ for Dirichlet boundary data.

We let $H(D)$ denote the Hilbert space closure of the set of functions $f \in C^\infty(D)$ with $(f,f)_\nabla < \infty$ such that $\int_D f dz = 0$ (we do not require that the functions have compact support), with respect to $(\cdot,\cdot)_\nabla$. A free boundary GFF is defined in the same way as a zero-boundary GFF, with an orthonormal basis of $H(D)$ replacing that of $H_0(D)$. Similarly, the law of the free boundary GFF is independent of the choice of orthonormal basis and it is conformally invariant.

In the interior of the domain, the laws of a zero-boundary GFF and a free boundary GFF are mutually absolutely continuous. In fact, we may decompose a free boundary GFF $h^f$ as $h^f = h^0 + \Fh$, where $h^0$ is a zero-boundary GFF and $\Fh$ is a harmonic function, independent of $h^0$.

\begin{rmk}[Radial/lateral decomposition in $\UH$]
We have the orthogonal decomposition $H(\UH) = H_\Rad(\UH) \oplus H_\Lat(\UH)$, where $H_\Rad(\UH)$ (resp.\ $H_\Lat(\UH)$) is the space of functions in $H(\UH)$ which are constant (resp.\ has mean zero) on each semicircle $\partial B(0,s)$.

If $h$ is a free boundary GFF on $\UH$, then the projection of $h$ onto $H_\Rad(\UH)$ is called the radial part of $h$ and if $h_\Rad(z) = h_{|z|}(0)$ denotes the average value of $h$ on $\partial B(0,|z|)$, then $(h_\Rad(e^{-t}))_{t \in \R}$ has the same law as $(B_{2t})_{t \in \R}$ where $B$ is a two-sided Brownian motion with $B_0 = 0$. The projection of $h$ onto $H_\Lat(\UH)$ is called the lateral part of $h$.
\end{rmk}

\subsection{Liouville quantum gravity}
\label{sec:LQG}
Fix $\gamma \in (0,2]$. A $\gamma$-Liouville quantum gravity ($\gamma$-LQG) surface is a law on equivalence classes of pairs $(D,h)$ where $D$ is a simply connected domain and $h$ a distribution on $D$, such that $(D,h)$ and $(\wt{D},\wt{h})$ are equivalent if there exist a conformal map $\psi\colon \wt{D} \to D$ such that 
\begin{align}\label{eq:LQG_coordinate_change}
    \wt{h} = h \circ \psi + Q_\gamma \log|\psi'|, \quad Q_\gamma = \frac{2}{\gamma} + \frac{\gamma}{2}.
\end{align}
Typically, $h$ is a random distribution which looks locally like a GFF. One can also define quantum surfaces with marked points. We say that $(D,h,z_1,\dots,z_n)$ and $(\wt{D},\wt{h},\wt{z}_1,\dots,\wt{z}_n)$ are equivalent if $(D,h)$ and $(\wt{D},\wt{h})$ are equivalent as quantum surfaces and $\psi(\wt{z}_j) = z_j$ for all $1 \leq j \leq n$.

An LQG surface comes naturally equipped with an area measure and a length measure, but we shall only need the latter. Consider a $2$-LQG surface embedded into $\UH$ and for $\epsilon > 0$ and $x \in \R$, we denote by $h_\epsilon(x)$ the average value of $h$ on $\partial B(x,\epsilon) \cap \UH$ (this makes sense for any $h$ which is locally absolutely continuous with respect to a GFF). Then we define the boundary length measure to be
\begin{align}\label{eq:2LQG_length}
    \nu_h(dx) = \lim_{\epsilon \to 0} \epsilon \!\left(\log (1/\epsilon) -\frac{h_\epsilon(x)}{2} \right)\! e^{h_\epsilon(x)} dx.
\end{align}
By the conformal coordinate change~\eqref{eq:LQG_coordinate_change} one can then define the quantum length of boundaries in arbitrary simply connected domains. We note that while $\nu_h$ is a measure on $\partial D$, it can actually be used to measure the length of curves inside on $D$ as well. Indeed, if $\eta$ is a curve in $D$ and $f\colon D \setminus \eta \to D$ is a conformal map which extends to the boundary (in the sense of prime ends), then letting $\eta^L$ (resp.\ $\eta^R$) denote the left (resp.\ right) side of $\eta$ we define the length of $\eta^q$ as $\nu_{\wt{h}}(f(\eta^q))$, where $\wt{h} = h \circ f^{-1} + 2 \log|(f^{-1})'|$ and $q \in \{L,R\}$ (note here that $Q_2 = 2$). 

Next, we recall the definition of a quantum wedge. 
\begin{defin}
A $(\gamma,\alpha)$-quantum wedge, $\alpha \in (-\infty,Q_\gamma)$ is a doubly marked $\gamma$-LQG surface $\CW = (\UH,h,0,\infty)$ such that the projection $h_\Lat$ of $h$ on $H_\Lat(\UH)$ has the law of the lateral part of a free boundary GFF on $\UH$ and such that if $X_t$ denotes the average value of $h$ on $\partial B(0,e^{-t})$, then $X$ is as follows.
\begin{itemize}
    \item $(X_t)_{t \geq 0}$ has the law of $(B_{2t} + \alpha t)_{t \geq 0}$, where $B$ is a standard Brownian motion with $B_0 = 0$, conditioned so that $B_{2t} + \alpha t \leq Q_\gamma t$ for all $t \geq 0$.
    \item $(X_t)_{t \leq 0}$ has the law of $(\wh{B}_{-2t} + \alpha t)_{t \leq 0}$, where $\wh{B}$ is a standard Brownian motion with $\wh{B}_0 = 0$.
    \item $h_\Lat$, $(X_t)_{t \leq 0}$ and $(X_t)_{t \geq 0}$ are independent.
\end{itemize}
We then write $(\UH,h,0,\infty) \sim \qwedgeA{\gamma}{\alpha}$. The particular embedding in this definition is called the last exit parametrisation.
\end{defin}

\begin{rmk}\label{rmk:wedge_strip}
    Sometimes it is useful to parametrise a quantum wedge by the strip instead. This is done using the conformal coordinate change~\eqref{eq:LQG_coordinate_change} with $\psi(z) = e^{-z}$. Then, a last exit parametrisation embedding of $(h,\strip,-\infty,+\infty)$ can be sampled by letting $h = X + h_\Lat$ where $h_\Lat$, the projection of $h$ onto $H_\Lat(\strip)$, has the law of the lateral part of a free boundary GFF in $\strip$ and $X_t$ is the average value of $h$ on $\{t \} \times [0,\pi]$, defined as follows.
    \begin{itemize}
        \item $(X_t)_{t \geq 0}$ has the law of $(B_{2t} - (Q_\gamma - \alpha)t)_{t \geq 0}$, where $B$ is a standard Brownian motion with $B_0 = 0$, conditioned so that $B_{2t} - (Q_\gamma - \alpha) t \leq 0$ for all $t \geq 0$.
        \item $(X_t)_{t \leq 0}$ has the law of $(\wt{B}_{-2t} - (Q_\gamma - \alpha)t)_{t \leq 0}$, where $\wt{B}$ is a standard Brownian motion with $\wt{B}_0 = 0$.
        \item $h_\Lat$, $(X_t)_{t \leq 0}$ and $(X_t)_{t \geq 0}$ are independent.
    \end{itemize}
\end{rmk}

\begin{rmk}\label{rmk:bm_conditioned}
    Some care has to be taken when conditioning on a zero probability event. Fix $a > 0$ and let $(U_t)_{t \geq 0}$ be $(B_t-at)_{t \geq 0}$ ``conditioned to stay negative''. By this we mean the weak limit as $\epsilon \to 0$ of the processes $(B_t-at)_{t \geq 0}$ conditioned to stay below $\epsilon$. A sample from this law can be drawn by sampling a standard Brownian motion $\wh{B}_t$, letting $\tau = \sup\{t \geq 0: \wh{B}_t - at \geq 0\}$ (which is a.s.\ finite) and setting $U_t = \wh{B}_{\tau + t} - a(\tau + t)$.
\end{rmk}

\section{Natural measure}\label{sec:natural_measure}
For any field $h$ we let $\nu_h$ denote the $2$-LQG boundary measure associated with $h$, defined in~\eqref{eq:2LQG_length}. Moreover, we let $\eta$ be an $\SLE_4$ in $\UH$ from $0$ to $\infty$ and $(f_t)$ its centred Loewner chain. For $t \geq 0$ we denote by $\eta^t$ the curve $\eta^t(u) = f_t(\eta(t+u))$ and note that $\eta^0 = \eta$. Moreover, we write $\psi_t = f_t^{-1}$ and let $f_{s,t} = f_t \circ \psi_s$ and $\psi_{s,t} = f_{s,t}^{-1} = f_s \circ \psi_t$.

We let $h^0$ be a zero-boundary GFF independent of $\eta$ and for each $t > 0$ let 
\begin{align}\label{eq:zipping_fields}
    h^t = h^0 \circ \psi_t + 2 \log |\psi_t'|
\end{align}
be the field on $\UH$ formed by unzipping $h^0$ along $\eta^0$. Then, we define the measure $\mu^0$ on $\eta^0$ by
\begin{align}\label{eq:measure_eta_0}
    \mu^0|_{\eta^0([0,t])}(dz) = F(z) \E[ \nu_{h^t} \, | \, \eta^0] \circ f_t(dz),
\end{align}
where $F(z) = r_\UH(z)^{-1/2}$, and $r_\UH(z)$ denotes the conformal radius of $\UH$ seen from $z$. An identity that we will use several times is the following. If $\varphi$ be a conformal map, then
\begin{align}\label{eq:inverse_derivative}
    \varphi'(\varphi^{-1}(z)) = \frac{1}{(\varphi^{-1})'(z)}.
\end{align}

We begin by proving that this definition of $\mu^0$ is consistent for different values of $t$.
\begin{lem}
The definition of $\mu^0$ given in~\eqref{eq:measure_eta_0} is consistent for different values of $t$. That is, if $0 < s < t$, then for each $A \subset [0,s]$, we a.s.\ have that
\begin{align*}
    \int_{\eta^0(A)} F(z) \E[ \nu_{h^s} \, | \, \eta^0] \circ f_s(dz) = \int_{\eta^0(A)} F(z) \E[ \nu_{h^t} \, | \, \eta^0] \circ f_t(dz).
\end{align*}
\end{lem}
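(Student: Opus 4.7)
The plan is to reduce the consistency statement to the conformal coordinate change rule for $2$-LQG boundary length, applied to the map $\psi_{s,t} = f_s \circ \psi_t$, which relates the two fields $h^s$ and $h^t$.

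First, I would derive the key coordinate change identity between the fields. Writing $\psi_t = \psi_s \circ \psi_{s,t}$ and applying the chain rule to \eqref{eq:zipping_fields} gives
\begin{align*}
h^t = h^0 \circ \psi_s \circ \psi_{s,t} + 2\log|\psi_s'(\psi_{s,t})| + 2\log|\psi_{s,t}'| = h^s \circ \psi_{s,t} + 2\log|\psi_{s,t}'|,
\end{align*}
which is precisely \eqref{eq:LQG_coordinate_change} with $Q_2 = 2$ for the conformal map $\psi_{s,t}\colon \UH \to \UH \setminus \eta^s([0,t-s])$. The definition \eqref{eq:2LQG_length} of the critical LQG boundary length and its conformal covariance then give $\nu_{h^t}(B) = \nu_{h^s}(\psi_{s,t}(B))$ for Borel sets $B \subset \partial \UH$, where we interpret $\psi_{s,t}(B)$ via prime ends on $f_s(\eta^0([s,t]))$. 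Since $\psi_{s,t}$ is measurable with respect to $\eta^0$ alone, taking conditional expectations yields
\begin{align*}
\E[\nu_{h^t}\,|\,\eta^0](B) = \E[\nu_{h^s}\,|\,\eta^0](\psi_{s,t}(B)).
\end{align*}

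Next, I would perform a chain of changes of variables to match the two integrals. Starting from the right-hand side and substituting $w = f_t(z)$,
\begin{align*}
\int_{\eta^0(A)} F(z) \, \E[\nu_{h^t}\,|\,\eta^0] \circ f_t(dz) = \int_{f_t(\eta^0(A))} F(\psi_t(w)) \, d\E[\nu_{h^t}\,|\,\eta^0](w),
\end{align*}
and then substituting $u = \psi_{s,t}(w)$ using the preceding identity for the conditional intensity,
\begin{align*}
= \int_{\psi_{s,t}(f_t(\eta^0(A)))} F(\psi_t \circ f_{s,t}(u)) \, d\E[\nu_{h^s}\,|\,\eta^0](u).
\end{align*}
The two identities $\psi_{s,t} \circ f_t = f_s \circ \psi_t \circ f_t = f_s$ (on prime ends of $\eta^0(A)$, where $\psi_t \circ f_t$ acts as the identity) and $\psi_t \circ f_{s,t} = \psi_t \circ f_t \circ \psi_s = \psi_s$ simplify this to
\begin{align*}
= \int_{f_s(\eta^0(A))} F(\psi_s(u)) \, d\E[\nu_{h^s}\,|\,\eta^0](u) = \int_{\eta^0(A)} F(z) \, \E[\nu_{h^s}\,|\,\eta^0] \circ f_s(dz),
\end{align*}
giving the left-hand side after reversing the substitution.

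The content is essentially bookkeeping and the only step requiring care is the prime end interpretation, since for $\kappa = 4$ the curve $\eta^0$ is simple and each point of $\eta^0(A)$ corresponds to two distinct prime ends mapping to two points on $\R$ under $f_t$. The integrand $F(z)$ depends only on the Euclidean point $z \in \UH$, so both prime ends contribute the same factor, and the identity $\nu_{h^t}(B) = \nu_{h^s}(\psi_{s,t}(B))$ applied to the two one-sided images separately sums to the claimed equality. I expect the main obstacle to be notational rather than conceptual: one must keep straight the correspondence between measures on $\R$, their pullbacks to $\eta^0$ under the centred Loewner maps, and the boundary identification of $\eta^0$ with $\R$ via prime ends on both sides.
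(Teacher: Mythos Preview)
Your proposal is correct and follows essentially the same approach as the paper: both derive $h^t = h^s \circ \psi_{s,t} + 2\log|\psi_{s,t}'|$ and then use the conformal covariance of the critical LQG boundary length to conclude $\nu_{h^t}\circ f_t = \nu_{h^s}\circ f_s$. The only difference is presentational: the paper verifies the covariance by an explicit $\epsilon$-limit computation on the pulled-back measures, whereas you invoke the coordinate-change rule as a known fact and then unwind the pushforwards by two substitutions; the content is the same.
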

\begin{proof}
If $0 < s < t$, then $h^t = h^s \circ \psi_{s,t} + 2 \log |\psi_{s,t}'|$. Thus,
\begin{align*}
    \nu_{h^t} \circ f_t(dz) &= \nu_{h^s \circ \psi_{s,t} + 2\log|\psi_{s,t}'|} \circ f_t(dz) \\
    &= \left[ \lim_{\epsilon \to 0} \epsilon \! \left( \log(1/\epsilon) - \frac{(h^s \circ \psi_{s,t})_\epsilon + 2 \log| \psi_{s,t}'|}{2} \right) e^{(h^s \circ \psi_{s,t})_\epsilon + 2 \log | \psi_{s,t}'|} \right] \circ f_{s,t} \circ f_s(dz) \\
    &= \left[ \lim_{\epsilon \to 0} \epsilon \! \left( \log(1/\epsilon) - \frac{h^s_{\epsilon/|f_{s,t}'|} - 2 \log| f_{s,t}'|}{2} \right) e^{h^s_{\epsilon/|f_{s,t}'|} - 2 \log | f_{s,t}'|} \right] \circ f_s(dz) \\
    &= \left[ \lim_{\epsilon \to 0} \frac{\epsilon}{|f_{s,t}'|} \! \left( \log(|f_{s,t}'|/\epsilon) - \frac{h^s_{\epsilon/|f_{s,t}'|}}{2} \right) e^{h^s_{\epsilon/|f_{s,t}'|}} \right] \circ f_s(dz) \\
    &= \nu_{h^s} \circ f_s(dz),
\end{align*}
where in the third equality we used~\eqref{eq:inverse_derivative} and the fact that as $\epsilon \to 0$, the preimages of circles of radius $\epsilon$ around $f_t(z)$ under $f_{s,t}$ are (roughly) circles of radius $\epsilon/|f_{s,t}'(f_s(z))|$ around $f_s(z)$. This concludes the proof.
\end{proof}

The rest of this section is divided into two subsections, the first of which is focused on establishing that $\mu^0$ is locally finite as a measure on $\UH$, that is, for each compact $K \subset \UH$, we have that $\mu^0(K) < \infty$ a.s. The second subsection is devoted to proving the conformal covariance of $\mu^0$ and then to deducing the local finiteness of $\mu^0$ as a measure on $\eta$, that is, for each compact $I \subset (0,\infty)$, we have that $\mu^0(\eta^0(I)) < \infty$ a.s.

\subsection{Local finiteness on $\UH$}
\label{sec:local_finiteness_UH}

\begin{lem}\label{lem:local_finiteness_UH}
Almost surely, $\mu^0$ is locally finite as a measure on $\UH$. That is, $\p[\mu^0(K) < \infty] = 1$ for each $K \subset \UH$ compact.
\end{lem}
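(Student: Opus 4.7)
The plan is to reduce the statement to showing $\E[\mu^0(K)] < \infty$ for every compact $K \subset \UH$; by Markov's inequality and a countable exhaustion of $\UH$ by compact sets, this gives the a.s.\ local finiteness of $\mu^0$. Fix such a $K$. Since $F(z) = r_\UH(z)^{-1/2}$ is continuous on $\UH$ and $K$ is bounded away from $\partial \UH$, we have $C_K := \sup_{z \in K} F(z) < \infty$. Moreover, since $\SLE_4$ is a.s.\ transient, there is an a.s.\ finite $\eta$-measurable time $\tau_K$ with $\eta \cap K \subset \eta([0,\tau_K])$, so the consistency lemma lets us take $t$ large enough for the computation below. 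Exchanging the inner conditional expectation $\E[\,\cdot\,|\,\eta^0]$ with integration and taking the outer expectation over $h^0$, the task reduces to bounding $C_K \cdot \E\bigl[\int_{f_t(K \cap \eta)} \nu_{h^t}(dw)\bigr]$.

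To compute this quantity, I would first condition on $\eta$. Given $\eta$, the field $h^t = h^0 \circ \psi_t + 2 \log|\psi_t'|$ is Gaussian, and its semicircle average $h^t_\epsilon(w)$ at $w \in \R$ is Gaussian with mean $2 \log|\psi_t'(w)| + o(1)$ and variance determined, via the conformal change of variables, by the Dirichlet Green's function of $\UH$ pulled back through $\psi_t$. Applying the standard Gaussian moment identities $\E[e^X] = e^{m + \sigma^2/2}$ and $\E[X e^X] = (m + \sigma^2) e^{m + \sigma^2/2}$ for $X \sim N(m, \sigma^2)$, together with the prefactor and logarithmic correction in~\eqref{eq:2LQG_length}, yields an explicit form for the conditional intensity $\E[\nu_{h^t}\,|\,\eta](dw)$ on $\R$. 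Changing variables back to $z = \psi_t(w) \in \eta$ and combining with the factor $F(z)$, one obtains after taking the outer expectation over $\eta$ an intensity bounded by a constant multiple of the $\SLE_4$ one-point function $G_4(z) = c_4 \sin(\arg z) \im(z)^{-1/2}$ recalled in Section~\ref{sec:SLE}. Since $G_4$ is smooth on $\UH$, this gives
\begin{align*}
\E[\mu^0(K)] \lesssim \int_K G_4(z)\, dz < \infty.
\end{align*}

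The main obstacle is the critical LQG correction factor $\log(1/\epsilon) - h^t_\epsilon(w)/2$ in~\eqref{eq:2LQG_length}: both $\log(1/\epsilon)$ and the conditional expectation of $h^t_\epsilon(w)/2$ are of comparable size, and a careful cancellation analysis is needed to extract the correct finite limit (in contrast to the subcritical case of \cite{benoist2018natural}, where this correction is absent). A further subtlety, highlighted in the introduction, is that the expected LQG length of a bounded Euclidean interval is infinite in the critical case, so naive $L^1$ bounds on $\nu_{h^t}$ fail: finiteness must be obtained from the precise interplay between the conformal derivative $|\psi_t'|$ near $\eta$ and the factor $F(z) = r_\UH(z)^{-1/2}$ in the definition of $\mu^0$. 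Finally, one must justify the interchange of limits and expectations via uniform Gaussian tail estimates, and handle separately the possible singular behaviour of $\psi_t'$ near the SLE tip $\eta(t)$ (which maps to $0 \in \R$); here the transience of $\SLE_4$ together with the compactness of $K$ in $\UH$ ensure that, for sufficiently large $t$, the relevant set $f_t(K \cap \eta)$ stays bounded away from $0$, sidestepping the singularity.
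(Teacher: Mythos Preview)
Your proposal attempts an $L^1$ approach: show $\E[\mu^0(K)]<\infty$ by computing the conditional intensity $\E[\nu_{h^t}\mid\eta]$ via Gaussian moment identities and then averaging over $\eta$. This is a genuinely different route from the paper's, and as written it has a real gap.

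The problem is the one you yourself flag but do not resolve. In the critical case $\gamma=2$, the interchange of $\lim_{\epsilon\to 0}$ and expectation in~\eqref{eq:2LQG_length} cannot be obtained from ``uniform Gaussian tail estimates'': the approximants $\epsilon(\log(1/\epsilon)-h^t_\epsilon/2)e^{h^t_\epsilon}$ are signed and not uniformly integrable, and the critical boundary length of a bounded interval has infinite first moment. Your Gaussian computation therefore yields $\lim_\epsilon\E[\,\cdot\,]$, not $\E[\lim_\epsilon\,\cdot\,]$, and there is no dominated-convergence or Fatou bridge available. The appeals to a ``careful cancellation analysis'' and to the ``precise interplay'' between $|\psi_t'|$ and $F(z)$ remain programmatic; no mechanism is supplied for why the critical divergence disappears after averaging over $\eta$. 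It is true that ultimately $\E[\mu^0](dz)=c\sin(\arg z)\im(z)^{-1/2}dz$ (Lemma~\ref{lem:density_intensity}), but the paper derives this \emph{after} local finiteness, via conformal covariance and absolute continuity, not by the direct Gaussian calculation you outline.

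The paper circumvents the $L^1$ obstruction entirely by working in $L^p$ for some small $p\in(0,1)$, where critical GMC \emph{does} have finite moments. It replaces $h^0$ by a $(2,1)$-quantum wedge $h^w=h^0+\Fh$ (Lemma~\ref{lem:wedge_decomposition}), exploits the invariance of the wedge law under unzipping along $\SLE_4$ to reduce $\E[\nu_{h^{w,t}}(f_t(\D\cap\UH))^p]$ to explicit Brownian estimates on the strip (Lemmas~\ref{lem:moment_finite}--\ref{lem:moment_bound_strip}), and then transfers the $p$th-moment bound back to $h^0$ using that the independent harmonic part $\Fh$ is a.s.\ bounded on any compact $K\subset\UH$. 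A truncation on the last exit time of a large ball and a scaling argument finish the proof.
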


We begin by noting that we can decompose a $(2,1)$-quantum wedge as the sum of a zero-boundary GFF and a harmonic function. For the proof, we refer to \cite{kms2023nonsimple} (the parametrisation is slightly different but the proof is the same).

\begin{lem}\label{lem:wedge_decomposition}
Let $(\UH,h^w,0,\infty) \sim \qwedgeA{2}{1}$ have the last exit parametrisation. Then, we can write $h^w = h^0 + \Fh$, where $h^0$ is a zero-boundary GFF on $\UH$ and $\Fh$ is a harmonic function on $\UH$ which is independent of $h^0$.
\end{lem}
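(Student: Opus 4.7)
The strategy is to realise $h^w$ explicitly as a transformation of a free boundary GFF, so that the decomposition of a free boundary GFF into zero-boundary GFF plus independent harmonic function (recalled in Section~\ref{sec:GFF}) propagates to the wedge. Concretely, sample a free boundary GFF $h^f$ on $\UH$ and write $h^f = h^0 + \Fh^f$, where $h^0$ is a zero-boundary GFF and $\Fh^f$ is an independent random harmonic function. Setting $\wh{h}(z) := h^f(z) - \log|z|$, the semicircle averages of $\wh h$ on $\partial B(0,e^{-t}) \cap \UH$ form the two-sided process $B_{2t} + t$, which is the radial law of $\qwedgeA{2}{1}$ before the ``last-exit'' conditioning. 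Let $\tau := \sup\{t \ge 0 : B_{2t} - t \ge 0\}$ -- almost surely finite by Remark~\ref{rmk:bm_conditioned} -- and set $\psi(z) := e^{-\tau} z$, a conformal automorphism of $\UH$.

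Applying the quantum coordinate change~\eqref{eq:LQG_coordinate_change} with $Q_2 = 2$ yields
\[
h^w(z) := \wh h(\psi(z)) + 2 \log|\psi'(z)| = \wh h(e^{-\tau} z) - 2\tau.
\]
Verifying the radial process of $h^w$ (via the strong Markov property at $\tau$ for the $t \ge 0$ piece, as in Remark~\ref{rmk:bm_conditioned}, and a time-reversal at $\tau$ for the $t \le 0$ piece) together with the invariance of the lateral part under $\psi$ shows $h^w \sim \qwedgeA{2}{1}$ in the last-exit parametrisation. Substituting $\wh h = h^0 + \Fh^f - \log|\cdot|$ and rearranging gives
\[
h^w = \wt h^0 + \Fh, \qquad \wt h^0 := h^0 \circ \psi, \qquad \Fh := \Fh^f \circ \psi - \log|\cdot| - \tau,
\]
where $\Fh$ is a random harmonic function on $\UH$ (as a sum of harmonic functions) and $\wt h^0 \stackrel{d}{=} h^0$ is a zero-boundary GFF by conformal invariance under the automorphism $\psi$.

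The main obstacle is to verify the independence of $\wt h^0$ and $\Fh$. Although $h^0 \perp \Fh^f$, the random scaling $\tau$ depends on the radial (semicircle-average) part of $h^f$ and hence on both $h^0$ and $\Fh^f$, so a priori $\wt h^0$ and $\Fh$ share the randomness of $\tau$. To address this I would decompose each of $h^0$ and $\Fh^f$ into its jointly independent radial and lateral Gaussian components, observe that $\tau$ is determined solely by the sum of the two radial components, and then combine (i) the stationarity of the radial part of a zero-boundary GFF in the strip parametrisation (equivalent to scale invariance of the zero-boundary GFF on $\UH$) with (ii) the strong Markov and time-reversal properties of two-sided Brownian motion at the last-exit time $\tau$, to show that after scaling by $\psi$ the radial component of $\wt h^0$ becomes independent of both $\tau$ and $\Fh^f \circ \psi$. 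The lateral component of $h^0$ is independent of $(\tau, \Fh^f)$ by conformal invariance of its Gaussian structure, and combined with the radial argument this yields the desired independence. This Markov-type analysis is the technical heart of the proof in~\cite{kms2023nonsimple} (carried out in a slightly different parametrisation); adapting it to the last-exit parametrisation used here is routine.
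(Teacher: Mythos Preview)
The paper does not give a proof of this lemma; it simply cites \cite{kms2023nonsimple} and remarks that the argument there (in a slightly different parametrisation) carries over. Your proposal goes further by sketching that construction---realise the wedge as a last-exit recentring of a free boundary GFF with a $-\log|\cdot|$ singularity, then transport the decomposition $h^f = h^0 + \Fh^f$ through the recentring---and this is indeed the strategy of the cited reference, so the approaches coincide.

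One point to tighten: you assert $\wt h^0 = h^0 \circ \psi \stackrel{d}{=} h^0$ ``by conformal invariance under the automorphism $\psi$'', but $\psi(z) = e^{-\tau}z$ is random and $\tau$ depends on $h^0$ (through $h^f$), so conformal invariance for deterministic maps does not apply directly. The marginal law of $\wt h^0$ is therefore part of what must be established, not a separate input---it is bundled with the independence issue you correctly flag in the next paragraph. Your outline for resolving that (radial/lateral splitting, stationarity under scaling, last-exit decomposition of the drifted Brownian motion) has the right shape; as you say, the details are in \cite{kms2023nonsimple}.
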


The following moment bound is the key ingredient in the proof of Lemma~\ref{lem:local_finiteness_UH}. The reason for the choice of quantum surface is that $\qwedgeA{2}{1}$ is natural in the context of $\SLE_4$, as if we unzip such a surface along an $\SLE_4$ curve for a finite time, then the resulting law is still that of a $\qwedgeA{2}{1}$, see \cite[Theorem~1.5]{hp2021welding}.

\begin{lem}\label{lem:moment_finite}
Let $\eta^0 \sim \SLE_4$ and let $(\UH,h^w,0,\infty) \sim \qwedgeA{2}{1}$ have the last passage parametrisation and be independent of $\eta^0$. Set $h^{w,t} = h^w \circ \psi_t + 2 \log|\psi_t'|$. Then there exists $p \in (0,1)$ such that for each $t \geq 0$,
\begin{align*}
    \E[ \nu_{h^{w,t}}(f_t(\D \cap \UH))^p] < \infty.
\end{align*}
\end{lem}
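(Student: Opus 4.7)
The strategy is to combine the welding theorem \cite[Theorem~1.5]{hp2021welding}, which identifies $(\UH, h^{w,t}, 0, \infty)$ as a $(2,1)$-quantum wedge, with fractional moment bounds for the critical LQG boundary length on bounded intervals. The key observation is that, even though the first moment of $\nu_h$ on a bounded interval is infinite in the critical regime, fractional moments of order $p \in (0, 1)$ can be controlled, which is exactly what the statement allows.

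First, I would reduce to a bounded interval. Fix $\eta^0$; then $f_t$ is deterministic, and by standard conformal distortion estimates (Koebe $1/4$-type) applied to $f_t$ on $\UH \setminus K_t$ there is an a.s.\ finite random variable $R_t$, depending only on $\eta^0([0,t])$, such that $f_t(\D \cap \UH) \cap \R \subset [-R_t, R_t]$. Since $\nu_{h^{w,t}}$ is a measure on $\R$, this gives $\nu_{h^{w,t}}(f_t(\D \cap \UH)) \leq \nu_{h^{w,t}}([-R_t, R_t])$. For fixed $t$, the law of $R_t$ has fractional moments of all orders.

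Next, apply the welding theorem: $(\UH, h^{w,t}, 0, \infty)$ has the law of a $(2,1)$-quantum wedge. Since a doubly marked $(2,1)$-wedge embedded in $(\UH, 0, \infty)$ is canonically parametrised up to the scaling $z \mapsto \lambda z$, there is a random $\lambda_t > 0$ such that
\begin{align*}
    \wt h^t(z) := h^{w,t}(\lambda_t z) + 2 \log \lambda_t
\end{align*}
has the law of a standard $(2,1)$-quantum wedge $h^w$ in the last-exit parametrisation (Lemma~\ref{lem:wedge_decomposition} then decomposes $\wt h^t = \wt h^0 + \wt \Fh$, with $\wt h^0$ a zero-boundary GFF and $\wt \Fh$ an independent harmonic function carrying a logarithmic singularity at $0$ coming from the $\alpha = 1$ drift). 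The conformal coordinate change formula for the boundary length gives
\begin{align*}
    \nu_{h^{w,t}}([-R_t, R_t]) = \nu_{\wt h^t}([-R_t/\lambda_t, R_t/\lambda_t]).
\end{align*}

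The technical heart of the proof is then to establish, for some $p \in (0, 1)$ and some exponent $\beta(p) > 0$,
\begin{align*}
    \E\!\left[\nu_{\wt h^t}([-r, r])^p \, \big| \, \wt \Fh\right] \lesssim r^{\beta(p)} \Phi(\wt \Fh),
\end{align*}
with $\Phi(\wt \Fh)$ an a.s.\ finite functional of $\wt \Fh$ with finite expectation, and then to combine this with fractional moment control on $R_t$ and $\lambda_t^{-1}$ via H\"older's inequality. The main obstacle is this fractional moment bound in the critical regime: the Seneta-Heyde factor $\log(1/\epsilon) - h_\epsilon / 2$ in~\eqref{eq:2LQG_length} forces us to track second-order corrections absent in the subcritical treatment of \cite{ms2022volume}, and the logarithmic singularity of $\wt \Fh$ at $0$ must be integrated against the critical boundary length near that point. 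A secondary difficulty is that the random scaling $\lambda_t$ couples the randomness of $\eta^0$ and $h^w$ in a non-explicit way through the welding, so joint tail bounds on $(R_t, \lambda_t)$ are needed to close the H\"older step.
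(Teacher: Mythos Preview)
Your overall strategy (welding invariance plus fractional moment bounds on bounded intervals) is the right one, but the proposal creates a genuine difficulty for itself by introducing the random rescaling $\lambda_t$. The welding statement \cite[Theorem~1.5]{hp2021welding} is used in the paper as a stationarity statement for the capacity zipper: for each fixed $t$, the pair $(h^{w,t},\eta^t)$ has the same law as $(h^{w},\eta^0)$ as a field--curve pair on $\UH$, with $h^{w,t}$ independent of $\eta^0|_{[0,t]}$. In particular no rescaling is needed, so your ``secondary difficulty'' (joint tails of $(R_t,\lambda_t)$) simply disappears. As you present it, controlling $\lambda_t$ is the part you cannot close, and that is the gap.

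The paper then organises the estimate differently from your H\"older scheme. It first reduces to a single $t$-independent quantity by monotonicity: $\nu_{h^{w,t}}(f_t(\D\cap\UH))$ is nondecreasing in $t$ and stabilises at the last exit time $\tau$ of $\D$. It then partitions on the events $E_n=\{\sup_{0\le s\le\tau}|\eta^0(s)|\in(n-1,n]\}$, uses the one-arm bound $\p(E_n)\lesssim n^{-2}$ from \cite{wz2017boundaryarm}, and on $E_n$ uses the deterministic containment $f_\tau(\D\cap\UH)\cap\R\subset[-4n,4n]$. The fractional moment bound you allude to is made explicit beforehand in Lemma~\ref{lem:moment_bound_strip} via the strip parametrisation of the wedge, giving $\E[\nu_{h^{w}}([-4n,4n])^p]\lesssim n^{4p^2+2p}$. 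Summing $\sum_n n^{4p^2+2p-2}$ converges once $p$ is small enough that $4p^2+2p<1$.

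Two further remarks. First, conditioning on the harmonic part $\wt\Fh$ is not used here at all; the decomposition of Lemma~\ref{lem:wedge_decomposition} enters only in the subsequent local finiteness argument (Lemma~\ref{lem:local_finiteness_UH}), to pass from the wedge field back to the zero-boundary GFF. Second, your proposed bound $\E[\nu_{\wt h^t}([-r,r])^p\mid\wt\Fh]\lesssim r^{\beta(p)}\Phi(\wt\Fh)$ is not established and would require essentially the same strip computation as Lemma~\ref{lem:moment_bound_strip}; once you have that lemma and drop $\lambda_t$, the argument goes through directly without any conditioning on $\wt\Fh$.
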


Before proving Lemma~\ref{lem:moment_finite}, we begin by providing some moment bounds.

\begin{lem}\label{lem:moment_bound_strip}
Let $(\strip,h,-\infty,+\infty) \sim \qwedgeA{2}{1}$ have the last exit parametrisation and let for each $k \in \Z$, $I_k = [k-1,k] \times \{0,\pi\}$. For each $p \in (0,1)$ there exists a constant $C_p > 0$ such that if $k \geq 1$, then $\E[\nu_h(I_k)^p] \leq C_p$ and if $k \leq 0$, then 
\begin{align*}
    \E[\nu_h(I_k)^p] \leq C_p e^{-(4p^2 + 2p)k}.
\end{align*}
\end{lem}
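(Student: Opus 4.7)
The plan is to use the decomposition from Remark~\ref{rmk:wedge_strip} to write $h = X + h_\Lat$, where $X_t$ is the vertical-slice-average process (a modified Brownian motion with drift) and $h_\Lat$ is an independent lateral part that is translation-invariant along $\R$. The first step is to establish the shift identity: for $x \in \partial \strip$ with $\re(x) = t$,
\begin{equation*}
    \nu_h(dx) = e^{X_t} \nu_{h_\Lat}(dx).
\end{equation*}
This is obtained by inserting $h_\epsilon(x) = X_\epsilon(x) + (h_\Lat)_\epsilon(x)$ into~\eqref{eq:2LQG_length} and using the boundary continuity of $X$ (so that $X_\epsilon(x) \to X_t$) together with the fact that $\epsilon e^{(h_\Lat)_\epsilon(x)} dx \to 0$ as $\epsilon \to 0$ (as the subcritical GMC at the critical parameter $\gamma = 2$), which kills the cross term produced by expanding $\log(1/\epsilon) - h_\epsilon/2$. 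The identity yields
\begin{equation*}
    \nu_h(I_k) \leq e^{M_k} \nu_{h_\Lat}(I_k), \qquad M_k := \sup_{t \in [k-1, k]} X_t,
\end{equation*}
and independence of $X$ and $h_\Lat$ then gives $\E[\nu_h(I_k)^p] \leq \E[e^{p M_k}] \cdot \E[\nu_{h_\Lat}(I_k)^p]$.

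The lateral factor is controlled by translation invariance of $h_\Lat$ along $\R$: $\E[\nu_{h_\Lat}(I_k)^p] = \E[\nu_{h_\Lat}(I_0)^p]$, which is finite for each $p \in (0,1)$ by standard $p$-th moment estimates for the critical GMC on bounded boundary intervals (the critical measure is in $L^p$ for $p < 1$ but not for $p \geq 1$, consistent with the infinite expectation of $\nu_h$ noted in the introduction). Hence $\E[\nu_{h_\Lat}(I_k)^p] \leq C_p$ uniformly in $k \in \Z$.

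For the Brownian factor, when $k \geq 1$ the process $X|_{[0,\infty)}$ is conditioned to stay $\leq 0$, so $M_k \leq 0$ and $\E[e^{p M_k}] \leq 1$, which gives the first bound of the lemma. For $k \leq 0$, writing $X_t = \wt{B}_{-2t} - t$ and using the independent-increments decomposition $\wt{B}_{-2t} = \wt{B}_{-2k} + (\wt{B}_{-2t} - \wt{B}_{-2k})$ on $t \in [k-1,k]$ gives
\begin{equation*}
    M_k \leq (1-k) + \wt{B}_{-2k} + S,
\end{equation*}
where $S$ is the supremum over an interval of length $\leq 2$ of an independent Brownian motion. Since $\wt{B}_{-2k} \sim N(0, -2k)$ and $S$ has all exponential moments, one obtains $\E[e^{p M_k}] \leq C_p e^{-(p+p^2)k}$, which is bounded above by $C_p e^{-(4p^2 + 2p)k}$ for $k \leq 0$ and $p \in (0,1)$ (since $p + p^2 \leq 4p^2 + 2p$).

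The main technical obstacle is the rigorous justification of the shift identity at $\gamma = 2$, since it requires controlling the $\epsilon \to 0$ limit of the regularised measures in the presence of both the multiplicative factor $e^{X_\epsilon(x)}$ and the logarithmic correction in the definition of $\nu_h$, together with the $L^p$-boundedness of $\nu_{h_\Lat}(I_0)$ for $p < 1$. Both are standard in critical-GMC theory but require the kind of careful analysis typical of critical $2$-LQG.
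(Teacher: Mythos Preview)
Your proposal is correct and follows essentially the same approach as the paper: decompose $h = X + h_\Lat$, bound $\nu_h(I_k)$ by $\nu_{h_\Lat}(I_k)$ times an exponential of $\sup_{[k-1,k]} X$, use independence together with the known $p$th-moment bound for the lateral critical GMC (the paper cites \cite[Lemma~A.4]{kms2021regularity}), and then handle $k \geq 1$ via $X \leq 0$ and $k \leq 0$ via a direct Gaussian moment computation for $\wt B_{-2k}$ plus an independent short-time supremum. The only difference is cosmetic: the paper writes the cruder bound $\nu_h(I_k) \leq \nu_{h_\Lat}(I_k)\,e^{2\sup X}$, which produces the exponent $4p^2+2p$ on the nose, whereas your shift identity $\nu_h(dx)=e^{X_t}\nu_{h_\Lat}(dx)$ yields the sharper exponent $p^2+p$ that you then relax to match the stated bound.
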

\begin{proof}
We let $h = X + h_\Lat$ be the decomposition into the average on vertical lines process $X$ and the lateral part $h_\Lat$ of $h$. By~\cite[Lemma~A.4]{kms2021regularity} we have that $\E[ \nu_{h_\Lat}(I_k)^p] < \infty$ for all $k \in \Z$ and $p \in (0,1)$. Moreover, we have that
\begin{align}\label{eq:measure_bound}
    \nu_h(I_k) \leq \nu_{h_\Lat}(I_k) \exp\!\Big(2 \sup_{t \in [k-1,k]} X_t \Big).
\end{align}
Since $X_t$ is non-positive whenever $t \geq 0$, the first assertion follows. Consider now the case $k \leq 0$, so that the Brownian motion in the process $X$ is not conditioned on any event. Then,
\begin{align*}
    \sup_{t \in [k-1,k]} X_t \leq 1-k + \sup_{t \in [k-1,k]} \wt{B}_{2t} = 1-k + \wt{B}_{-2k} + \sup_{t \in [k-1,k]} \wt{B}_{-2t} - \wt{B}_{-2k}.
\end{align*}
Thus, since $\wt{B}$ has stationary and independent increments and for a Brownian motion $\wh{B}$ and any $c>0$, the expectation $\E[ \exp(c \sup_{t \in [0,2]} \wh{B}_t)]$ is finite, it follows from~\eqref{eq:measure_bound} that for $p \in (0,1)$,
\begin{align*}
    \E[\nu_h(I_k)^p] &\leq \E[\nu_{h_\Lat}(I_k)^p] \E\!\left[\exp\!\left(2p \sup_{t \in [k-1,k]} X_t \right) \right] \\
    &= e^{2p(1-k)} \E[\nu_{h_\Lat}(I_k)^p] \E[ \exp(2p B_{-2k})] \E\!\left[\exp\!\left(2p \sup_{t \in [0,2]} \wh{B}_t \right) \right] \\
    &\leq C_p e^{-(4p^2 + 2p)k}
\end{align*}
where $C_p$ is some positive constant depending only on $p$.
\end{proof}

\begin{proof}[Proof of Lemma~\ref{lem:moment_finite}]
Let $\tau = \sup\{t \geq 0: \eta^0(t) \in \D\}$. We note that by conformal invariance,
\begin{align*}
    \nu_{h^{w,s}}(f_s(\D \cap \UH)) \leq \nu_{h^{w,\tau}}(f_\tau(\D \cap \UH)) = \nu_{h^{w,t}}(f_t((\D \cap \UH))
\end{align*}    
whenever $s \leq \tau \leq t$. By \cite[Theorem~4.1]{wz2017boundaryarm} we have that
\begin{align}\label{eq:arm_exponent}
    \p \! \left[ \sup_{0 \leq t \leq \tau} | \eta^0(t)| \geq R \right] \lesssim R^{-2}.
\end{align}
We let $\tau_R = \inf\{t \geq 0: |\eta^0(t)| \geq R \}$ and note that for all $0 \leq t \leq \tau_R$, $|f_t(0^-)|,|f_t(0^+)| \leq 4R$. Indeed, this follows immediately from \cite[Equation~(8)]{schoug2020multifractal} and a comparison with the compact $\UH$-hull $R\ol{\D} \cap \UH$. We shall now bound moments of the quantum length of $[-4R,4R]$.

We first bound $p$th moment of the quantum length of $[-4R,-1] \cup [1,4R]$. As above, let $I_k = [k-1,k] \times \{0,\pi\}$. Then, letting $(\strip,h^S,-\infty,+\infty) \sim \qwedgeA{2}{1}$, this corresponds to bounding $\E[\nu_{h^S}([-\log 4R,0] \times \{0,\pi\})^p]$. By Lemma~\ref{lem:moment_bound_strip} and the inequality $(\sum_j x_j)^p \leq \sum_j x_j^p$ (for $x_j > 0$, $0<p<1$),
\begin{align*}
    \E[\nu_{h^S}([-\log 4R,0] \times \{0,\pi\})^p] &\leq \sum_{k=0}^{\lceil \log 4R \rceil} \E[\nu_{h^S}(I_{-k})^p] \leq C_p \sum_{k=0}^{\lceil \log 4R \rceil} e^{(4p^2+2p)k} \\
    &\lesssim \int_0^{\lceil \log 4R \rceil} e^{(4p^2+2p)x} dx \lesssim R^{4p^2+2p},
\end{align*}
where the implicit constants depend on $p$. Moreover, let as in Remark~\ref{rmk:bm_conditioned}, $\wh{B}$ be a standard Brownian motion, $\tau = \sup\{ t \geq 0: \wh{B}_t - t \}$ and such that $X_t = \wh{B}_{\tau+t} - (\tau+t)$, and let $M = \sup_{t \geq 0} \wh{B}_t - t$. Then, letting $X_k^* = \sup_{t \in [k-1,k]} X_t$ and $k^* = \argmax_{t \in [k-1,k]} \wh{B}_t - t$, we have that
\begin{align*}
    \E[\nu_{h^S}([0,\infty) \times \{0,\pi\})^p] &\leq \E\!\left[ \sum_{k = 1}^\infty \exp\!\left( 2p X_k^* \right) \nu_{h_\Lat}(I_k)^p \right] \lesssim \E\!\left[ \sum_{k = 1}^\infty \exp\!\left( 2p X_k^* \right) \right] \\
    &\leq  \E\!\left[ \sum_{k = 1}^\infty \exp\!\left( 2p (\wh{B}_{k^*} - k^*) \right) \right] = \E\!\left[ \E\!\left[\sum_{k = 1}^\infty \exp\!\left( 2p (\wh{B}_{k^*} - k^*) \right) \middle| M \right]\right] \\
    &\lesssim \E[\exp(2pM)] \quad (\cite[\textup{Lemma}~\textup{A}.5]{dms2021mating})
\end{align*}
Letting $T_x = \inf\{ t\geq 0: \wh{B}_t \geq x \}$, we have that $\p( M > x) = \p( T_x < \infty) = \exp(-2x)$ and hence, since $p \in (0,1)$, $\E[\exp(2pM)]$ is finite. Consequently, 
\begin{align*}
    \E[\nu_{h^S}([-\log 4R,\infty] \times \{0,\pi\})^p] \lesssim R^{4p^2 + 2p}
\end{align*}
where the implicit depends only on $p$. We let $E_n$ be the event that $\sup_{0 \leq t \leq \tau} |\eta(t)| \in (n-1,n]$. Then, by \cite[Theorem~1.5]{hp2021welding}, we have that
\begin{align*}
    \E[\nu_{h^{w,\tau}}(f_\tau(\D \cap \UH))^p] &= \sum_{n=0}^\infty \E[\nu_{h^{w,\tau}}(f_\tau(\D \cap \UH))^p \, | \, E_n]\p(E_n) \\
    &\leq \sum_{n=0}^\infty \E[\nu_{h^w}([-4n,4n])^p]\p(E_n) \\
    &\lesssim \sum_{n=1}^\infty n^{4p^2+2p} n^{-2}
\end{align*}
which is finite if we pick $p$ small enough, so that $4p^2 + 2p - 2 < -1$.
\end{proof}

\begin{proof}[Proof of Lemma~\ref{lem:local_finiteness_UH}]
Let $p \in (0,1)$ be as in Lemma~\ref{lem:moment_finite}. We shall begin by showing that $\E[ \nu_{h^t} (f_t(K))^p] \newline < \infty$ for each $K \subset \tfrac{1}{2} \ol{\D} \cap \UH$ compact and each $t > 0$.

Fix some compact $K \subset \tfrac{1}{2} \ol{\D} \cap \UH$ and let $\delta = \dist(K,\partial \UH) > 0$. We decompose $h^w = h^0 + \Fh$ as in Lemma~\ref{lem:wedge_decomposition} and note that since $\delta > 0$, we have that $\Fh$ is almost surely bounded on $K$. Thus, if $E_K$ denotes the event that $\sup_{z \in K} | \Fh(z) | \leq C$, then $\p[ E_K ] \to 1$ as $C \to \infty$. We choose $C > 0$ large enough so that $\p[E_K] \geq 1/2$. Then, it follows that
\begin{align}\label{eq:upper_bound_moment_h^t}
    \E[ \nu_{h^t} (f_t(K))^p \, | \, \Fh] \one_{E_K} \leq e^{C p} \E[ \nu_{h^{w,t}} (f_t(K))^p \, | \, \Fh] \one_{E_K} \leq e^{C p} \E[ \nu_{h^{w,t}} (f_t(K))^p \, | \, \Fh].
\end{align}
Since $\E[\E[ \nu_{h^{w,t}} (f_t(K))^p \, | \, \Fh]] = \E[ \nu_{h^{w,t}} (f_t(K))^p] < \infty$, it follows that $\E[ \nu_{h^{w,t}} (f_t(K))^p \, | \, \Fh]$ is almost surely finite. By the independence of $h^0$ and $\Fh$, we have that $\E[ \nu_{h^t} (f_t(K))^p \, | \, \Fh] = \E[ \nu_{h^t} (f_t(K))^p]$. Since the right-hand side of~\eqref{eq:upper_bound_moment_h^t} is a.s.\ finite it follows that $\E[ \nu_{h^t} (f_t(K))^p] < \infty$.

Next, we shall show that $\mu^0(K) < \infty$ almost surely. Let $\tau$ be the last exit time of $\D$ for $\eta^0$. Since $\eta^0$ is transient, we have that $\tau < \infty$ almost surely and hence that if $E_T^*$ is the event that $\tau \leq T$, then $\p[E_T^*] \to 1$ as $T \to \infty$. Thus, since $\dist(z,\UH) \geq \delta$ for all $z \in K$, we have that
\begin{align}\label{eq:truncated_moment_bound}
    \E[\mu^0(K)^p \one_{E_T^*}] \leq \delta^{-p/2} \E[\nu_{h^\tau}(f_\tau(K))^p \one_{E_T^*}] \leq \delta^{-p/2} \E[\nu_{h^T}(f_T(K))^p]< \infty.
\end{align}
Assume for the sake of contradiction that $\p[\mu^0(K) = \infty] = p_0 > 0$. Choose $T > 0$ large enough so that $\p[E_T^*] \geq 1 - p_0/2$. Then, it follows from~\eqref{eq:truncated_moment_bound} that conditional on the event $E_T^*$, $\mu^0(K)$ is a.s.\ finite, and consequently, $\{ \mu^0(K) = \infty \}$ is contained in the union of $(E_T^*)^c$ and a zero probability event. This, however, is a contradiction since $\p[(E_T^*)^c] \leq p_0/2$. Hence $\mu^0(K)$ is a.s.\ finite.

We now consider a general compact $\wt{K} \subset \UH$ and let $R = \inf\{r > 0\colon \wt{K} \subset B(0,r/2) \}$ and $\delta = \dist(\wt{K},\partial \UH) > 0$. Let $\varphi_R(z) = z/R$ and note that $K \coloneqq \varphi_R(\wt{K}) \subset \tfrac{1}{2} \ol{\D} \cap \UH$. Moreover, we note that $\varphi_R(\eta^0) \sim \SLE_4$, $\wt{h}^0 \coloneqq h^0 \circ \varphi_R^{-1}$ is a zero-boundary GFF on $\UH$. Furthermore, we let $\sigma$ be the last exit time of $B(0,R)$ for $\eta^0$ and $(\wt{f}_t)$ be the Loewner chain corresponding to the curve $\wt{\eta}^0(t) \coloneqq \varphi_R(\eta^0(t))$, that is, $\wt{f}_t(z) = \tfrac{1}{R} f_t(Rz)$, and we let $\wt{\psi}_t(z) = \wt{f}_t^{-1}(z) = \tfrac{1}{R} \psi_t(Rz)$ and $\wt{h}^t = \wt{h}^0 \circ \wt{\psi}_t + 2 \log | \wt{\psi}_t'|$. Then by the LQG coordinate change and the scale invariance of $\eta^0$ and $h^0$, it follows that a.s.
\begin{align*}
    \nu_{h^\sigma}(f_\sigma(\wt{K})) =  \nu_{\wt{h}^\sigma + 2 \log R}(\wt{f}_\sigma(K)) \overset{d}{=} \nu_{h^\tau + 2\log R}(f_\tau(K)) = R^2 \nu_{h^\tau}(f_\tau(K)).
\end{align*}
Thus, it follows analogously to the above that $\E[\nu_{h^T}(f_T(\wt{K}))^p]$ is almost surely finite for $T>0$ and hence that $\mu^0(\wt{K})$ is as well.
\end{proof}

\subsection{Conformal covariance}
\label{sec:conformal_covariance}
Fix $s>0$ and let $\wt{h}^s$ be a zero-boundary GFF on $\UH$, independent of $\eta^0$. This then gives us a field $\wt{h}^0 = \wt{h}^s \circ f_s + 2 \log |f_s'|$ by zipping up along $\eta^0$, and hence a family of fields $(\wt{h}^t)_{t \geq 0}$ by letting $\wt{h}^t = \wt{h}^0 \circ \psi_t + 2 \log|\psi_t'|$. We recall that $f_{s,t} = f_t \circ \psi_s\colon \UH \setminus \eta^s([0,t-s]) \to \UH$ and define the measure $\wt{\mu}^s$ by
\begin{align}
    \wt{\mu}^s|_{\eta^s([0,t-s])}(dz) = F(z) \E[ \nu_{\wt{h}^t} \, | \, \eta^s] \circ f_{s,t}(dz).
\end{align}
Recall from~\eqref{eq:unzipped_measure} that $\mu^t(dz) = |\psi_t'(z)|^{-d_4} (\mu^0 \circ \psi_t)(dz)$, where $d_4 = 3/2$.
\begin{lem}\label{lem:conformal_covariance}
Almost surely, $\wt{\mu}^s = \mu^s$, that is,
\begin{align*}
    \mu^0|_{\eta^0([s,\infty))} \circ \psi_s(dz) = |\psi_s'(z)|^{3/2} \wt{\mu}^s(dz).
\end{align*}
\end{lem}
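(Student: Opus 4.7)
The plan is to exhibit both $\wt{\mu}^s$ and $\mu^s$ as the same explicit function of $\eta^0$. My first move is to observe that $\wt{\mu}^s$ depends on $\wt{h}^s$ only through the conditional expectation $\E[\cdot\,|\,\eta^s]$, so it is a deterministic function of $\eta^s$, which lets me freely recouple $\wt{h}^s$ to $h^0$ as long as the conditional law given $\eta^0$ remains that of a zero-boundary GFF on $\UH$. Writing $U \coloneqq \UH \setminus \eta^0([0,s])$ and decomposing $h^0 = (h^0)_U + (h^0)_U^\perp$ as in Section~\ref{sec:GFF}, I set $\wt{h}^s \coloneqq (h^0)_U \circ \psi_s$. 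By conformal invariance of the zero-boundary GFF this has the required conditional law, so it is independent of $\eta^0$ as required. Under this coupling,
\begin{align*}
    h^s = \wt{h}^s + \phi_s, \qquad \phi_s \coloneqq (h^0)_U^\perp \circ \psi_s + 2\log|\psi_s'|,
\end{align*}
with $\phi_s$ harmonic on $\UH$; precomposing with $\psi_{s,t}$ gives $h^t = \wt{h}^t + \Phi$ with $\Phi \coloneqq \phi_s \circ \psi_{s,t}$ also harmonic on $\UH$ (for any fixed $t > s$).

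Next I will invoke the critical LQG shift formula $\nu_{h+\phi} = e^\phi \nu_h$ for smooth $\phi$, which follows from~\eqref{eq:2LQG_length} by expanding $(h+\phi)_\epsilon = h_\epsilon + \phi_\epsilon$ and noting that the correction term $(\phi_\epsilon/2)\, e^{\phi_\epsilon}\, \epsilon\, e^{h_\epsilon}$ vanishes in the limit because the unnormalised critical exponential $\epsilon e^{h_\epsilon}$ has zero limit. This gives $\nu_{h^t} = e^\Phi \nu_{\wt{h}^t}$. Since $(h^0)_U^\perp$ is conditionally independent of $(h^0)_U$ given $\eta^0$, $\Phi$ is conditionally independent of $\nu_{\wt{h}^t}$ given $\eta^0$, so Fubini together with $\E[\nu_{\wt{h}^t}\,|\,\eta^0] = \E[\nu_{\wt{h}^t}\,|\,\eta^s]$ (both equal the same function of $\eta^s$) yields
\begin{align*}
    \E[\nu_{h^t}\,|\,\eta^0](dx) = \E[e^{\Phi(x)}\,|\,\eta^0] \cdot \E[\nu_{\wt{h}^t}\,|\,\eta^s](dx).
\end{align*}

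Finally I evaluate $\E[e^{\Phi(x)}\,|\,\eta^0]$ via the Gaussian moment generating function. Given $\eta^0$, $\Phi(x)$ is Gaussian with mean $2\log|\psi_s'(\psi_{s,t}(x))|$ and variance $\log(r_\UH(\psi_t(x))/r_U(\psi_t(x)))$, the standard on-diagonal identity for the variance of $(h^0)_U^\perp$ in terms of the difference of Dirichlet Green's functions. Using the conformal covariance $r_U(\psi_s(y)) = |\psi_s'(y)|\, r_\UH(y)$ with $y = \psi_{s,t}(x)$ and recalling $F = r_\UH^{-1/2}$, this simplifies to
\begin{align*}
    \E[e^{\Phi(x)}\,|\,\eta^0] = |\psi_s'(\psi_{s,t}(x))|^{3/2}\, \frac{F(\psi_{s,t}(x))}{F(\psi_t(x))},
\end{align*}
and plugging this into the defining formulas of $\mu^s = |\psi_s'|^{-3/2} \mu^0 \circ \psi_s$ and $\wt{\mu}^s$ (using $\psi_s \circ \psi_{s,t} = \psi_t$) makes all $F$-factors and derivative factors cancel exactly, giving $\mu^s = \wt{\mu}^s$. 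The main technical obstacle I anticipate is justifying the shift identity $\nu_{h+\Phi} = e^\Phi \nu_h$ for this specific $\Phi$, since $2\log|\psi_s'|$ carries a $-\log|x|$-type singularity on $\partial\UH$ near the image of the tip of $\eta^0([0,s])$; I plan to handle this by approximating $\Phi$ with smooth cutoffs away from the tip, verifying the identity in the approximation, and passing to the limit using dominated convergence together with the moment bounds from Lemma~\ref{lem:moment_finite}.
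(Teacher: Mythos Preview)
Your proposal is correct and follows essentially the same route as the paper. Both arguments couple $\wt{h}^s$ to $h^0$ via the domain Markov decomposition $h^0 = (h^0)_U + (h^0)_U^\perp$ with $U = \UH\setminus\eta^0([0,s])$, write $h^t = \wt{h}^t + (\text{centred Gaussian harmonic piece}) + 2\log|\psi_s'|\circ\psi_{s,t}$, and then compute the conditional exponential moment of the Gaussian piece to produce exactly the factor $|\psi_s'|^{3/2} F(\psi_{s,t}(\cdot))/F(\psi_t(\cdot))$ you obtain.

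The only technical difference is in how the critical shift is handled. The paper works directly with the $\epsilon$-approximation in~\eqref{eq:2LQG_length} and invokes \cite[Lemma~2.1]{powell2021critical} (finiteness of $\Var\,\FH(z)$ and centredness of $\FH$) to kill the extra $(\Phi_\epsilon/2)\,\epsilon\,e^{\wt h^t_\epsilon+\Phi_\epsilon}$ term in one stroke, rather than phrasing it as the identity $\nu_{h+\Phi}=e^\Phi\nu_h$ and then planning a cutoff-plus-dominated-convergence argument near the tip using Lemma~\ref{lem:moment_finite}. Your route is perfectly sound, but the citation to Powell's lemma is shorter and avoids the approximation step you flag as the main obstacle; you may find it cleaner to adopt that reference.
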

\begin{proof}
Since $\wt{h}^s$ is a zero-boundary GFF in $\UH$, we have that $\wt{h}^s \circ f_s$ is a zero-boundary GFF in $\UH \setminus \eta^0([0,s])$. Thus we may assume that $\wt{h}^s$ and $h^0$ are coupled together in such a way that we can define a Gaussian field $\FH$, which is conditionally independent of $\wt{h}^s$ given $\eta^0$ and such that $h^0 = \wt{h}^s \circ f_s + \FH$. Then the covariance kernel of $\FH$ is given by
\begin{align*}
    G_\UH(z,w) - G_{\UH \setminus \eta^0([0,s])}(z,w) = G_\UH(z,w) - G_\UH(f_s(z),f_s(w))
\end{align*}
and hence the variance of $\FH$ at a point $z$ is well-defined and equal to
\begin{align*}
    \Var \, \FH(z) = \lim_{\epsilon \to 0} \Var \, \FH_\epsilon(z) = \log r_\UH(z) - \log r_{\UH}(f_s(z)) + \log |f_s'(z)|
\end{align*}
(here $\FH_\epsilon(z)$ denotes the average value of $\FH$ on the circle $\partial B(z,\epsilon)$). The term $\log |f_s'(z)|$ comes from the change of variables dilating the ball of radius $\epsilon$ roughly by a factor of $|f_s'(z)|$.

We note that $h^t = h^0 \circ \psi_t + 2 \log |\psi_t'|=\wt{h}^t + \FH \circ \psi_t +2 \log |\psi_s'|\circ \psi_{s,t}$ and that since $\FH$ is centred and $\Var \, \FH(z)$ is finite, it follows from \cite[Lemma~2.1]{powell2021critical} that
\begin{align*}
    \E\! \left[ \lim_{\epsilon \to 0} \epsilon \!\left(\frac{(\FH \circ \psi_t)_\epsilon}{2} + \log |\psi_s'| \circ \psi_{s,t} \right) e^{\wt{h}^t_{\epsilon}+(\FH\circ \psi_t)_{\epsilon}+2 \log |\psi_s'|\circ \psi_{s,t}} \, \middle| \, \eta^0 \right] \circ f_t(dz)
\end{align*}
is the zero measure. It follows that
\begin{align*}
    &\mu^{0}|_{\eta^0([0,t])}(dz) = \\
    &=F(z)\E \! \left[\lim_{\epsilon \to 0} \epsilon \!\left(\log \!\left(\frac{1}{\epsilon} \right)-\frac{\wt{h}^t_{\epsilon}+(\FH\circ \psi_t)_{\epsilon}+2 \log |\psi_s'|\circ \psi_{s,t}}{2} \right)e^{\wt{h}^t_{\epsilon}+(\FH\circ \psi_t)_{\epsilon}+2 \log |\psi_s'|\circ \psi_{s,t}} \, \middle| \, \eta^0 \right]\circ f_t(dz) \\
    &=F(z)\E \!\left[\lim_{\epsilon \to 0} \epsilon \! \left(\log \left(\frac{1}{\epsilon} \right)-\frac{\wt{h}^t_\epsilon}{2} \right)|\psi_s'|^2\circ\psi_{s,t}e^{\wt{h}^t_\epsilon+(\FH\circ \psi_t)_\epsilon} \, \middle| \, \eta^0 \right]\circ f_t(dz) \\
    &= F(z) |f_s(z)|^{-2} \E[ e^{\FH(z)} \, | \, \eta^0] \E \!\left[\lim_{\epsilon \to 0} \epsilon \! \left(\log \left(\frac{1}{\epsilon} \right)-\frac{\wt{h}^t_\epsilon}{2} \right) e^{\wt{h}^t_\epsilon} \, \middle| \, \eta^0 \right]\circ f_t(dz),
\end{align*}
where~\eqref{eq:inverse_derivative} was used in the last inequality. Since the conditional law of $\FH(z)$ given $\eta^0$ is that of a centred Gaussian, we have that
\begin{align*}
    \E[ e^{\FH(z)} \, | \, \eta^0] = r_\UH(z)^{1/2} r_\UH(f_s(z))^{-1/2} |f_s'(z)|^{1/2} = \frac{F(f_s(z))}{F(z)} |f_s'(z)|^{1/2}
\end{align*}
and hence
\begin{align*}
    \mu^{0}|_{\eta^0([0,t])}(dz) = F(f_s(z)) |f_s'(z)|^{-3/2} \E \!\left[\lim_{\epsilon \to 0} \epsilon (\log (1/\epsilon)-\wt{h}^t_\epsilon/2) e^{\wt{h}^t_\epsilon} \, \middle| \, \eta^0 \right]\circ f_t(dz).
\end{align*}
Consequently,
\begin{align*}
    \mu^{0}|_{\eta^0([s,t])} \circ \psi_s(dz) &= F(z) |f_s'(\psi_s(z))|^{-3/2} \E \!\left[\lim_{\epsilon \to 0} \epsilon (\log (1/\epsilon)-\wt{h}^t_\epsilon/2) e^{\wt{h}^t_\epsilon} \, \middle| \, \eta^0 \right]\circ f_{s,t}(dz) \\
    &= |\psi_s'(z)|^{3/2} F(z) \E[ \nu_{\wt{h}^t} \, | \, \eta^s] \circ f_{s,t}(dz) \\
    &= | \psi_s'(z)|^{3/2} \wt{\mu}^s|_{\eta^s([0,t-s])}(dz),
\end{align*}
where~\eqref{eq:inverse_derivative} was used in the second equality. Thus, the proof is complete.
\end{proof}

The following lemma is proved in the same way as Lemma~\ref{lem:conformal_covariance}, and hence the proof is omitted.
\begin{lem}\label{lem:conformal_covariance_intensity}
Let $\phi_a(z) = az$ for $a > 0$. Then,
\begin{align*}
    \E[\mu^0] \circ \phi_a(dz) = a^{3/2} \E[\mu^0](dz).
\end{align*}
\end{lem}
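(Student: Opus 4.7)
The plan is to imitate the proof of Lemma~\ref{lem:conformal_covariance}, with the dilation $\phi_a$ playing the role of the unzipping map and conformal invariance of the zero-boundary GFF replacing the Markov decomposition. First I introduce the rescaled curve $\wt\eta := \phi_a^{-1}(\eta^0) = \eta^0/a$, which by Brownian scaling of the driver is again an $\SLE_4$ in $\UH$ from $0$ to $\infty$; its centred Loewner chain is $\wt f_t(z) = a^{-1} f_{a^2 t}(az)$, so that $\wt\psi_t(z) = a^{-1}\psi_{a^2 t}(az)$ and $\wt\psi_t'(z) = \psi_{a^2 t}'(az)$. I then couple the GFFs by setting $\wt h^0 := h^0 \circ \phi_a$: since $\phi_a\colon \UH\to\UH$ is conformal, $\wt h^0$ is a zero-boundary GFF on $\UH$, and it is independent of $\wt\eta$ because $h^0$ is independent of $\eta^0$. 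Building the measure $\wt\mu^0$ on $\wt\eta$ from the pair $(\wt\eta,\wt h^0)$ via~\eqref{eq:measure_eta_0}, the joint equality in law $(\wt\eta,\wt h^0) \overset{d}{=} (\eta^0,h^0)$ gives $\wt\mu^0 \overset{d}{=} \mu^0$, whence $\E[\wt\mu^0] = \E[\mu^0]$ as measures on $\UH$.

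The second step is to express $\wt\mu^0$ in terms of the original data. A direct chain-rule computation yields $\wt h^s = h^{a^2 s} \circ \phi_a$. Substituting $\delta = a\epsilon$ in~\eqref{eq:2LQG_length} — exactly the type of change of variables used in the consistency lemma at the start of Section~\ref{sec:natural_measure} — produces a stray additive term $\log a$ inside the parenthesis whose contribution vanishes because $\epsilon e^{h_\epsilon(x)}\,dx \to 0$ in the critical regime (equivalently, at $\gamma = 2$ one has $\nu_{h+c} = e^c \nu_h$). This yields the scaling identity $\nu_{\wt h^s} = a^{-2}\,\nu_{h^{a^2 s}} \circ \phi_a$. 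Combining this with $\phi_a \circ \wt f_s = f_{a^2 s} \circ \phi_a$ and $F \circ \phi_a = a^{-1/2}F$ (the latter from $r_\UH(az) = a\,r_\UH(z)$), I obtain
\begin{align*}
    \wt\mu^0|_{\wt\eta([0,s])}(dz) &= a^{-2}\, F(z)\, \E[ \nu_{h^{a^2 s}} \, | \, \eta^0]\circ f_{a^2 s}\circ \phi_a(dz), \\
    (\mu^0|_{\eta^0([0,a^2 s])}\circ \phi_a)(dz) &= a^{-1/2}\, F(z)\, \E[ \nu_{h^{a^2 s}} \, | \, \eta^0]\circ f_{a^2 s}\circ \phi_a(dz),
\end{align*}
the second line following by the change of variables $w = \phi_a(z)$ in the definition of $\mu^0|_{\eta^0([0,a^2 s])}$ and the identity $F \circ \phi_a = a^{-1/2}F$.

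Comparing the two displays gives $\mu^0 \circ \phi_a = a^{3/2}\,\wt\mu^0$ on $\wt\eta([0,s])$ for every $s > 0$, hence on all of $\wt\eta$. Taking expectations and using $\E[\wt\mu^0] = \E[\mu^0]$ then yields the claimed identity $\E[\mu^0]\circ\phi_a = a^{3/2}\E[\mu^0]$. The only delicate point is the careful handling of the logarithmic correction in the critical LQG measure during the change of variables $\delta = a\epsilon$, but this is precisely the calculation already carried out in the consistency lemma, with a deterministic shift by a constant playing the role that the Gaussian correction field $\FH$ plays in Lemma~\ref{lem:conformal_covariance}; no new techniques are required.
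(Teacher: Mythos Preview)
Your proof is correct and follows essentially the approach the paper indicates (an adaptation of the proof of Lemma~\ref{lem:conformal_covariance}, with the correction field $\FH$ replaced by the trivial/constant one coming from exact scale invariance of the zero-boundary GFF). One small expository point: the consistency lemma itself does not actually produce a stray additive term --- the $Q_2\log|\psi'|$ in the coordinate change cancels exactly with the rescaling of the circle average --- so the more accurate parallel for disposing of your $\log a$ contribution is the appeal to \cite[Lemma~2.1]{powell2021critical} (equivalently, the vanishing of the non-renormalised $\gamma=2$ boundary chaos) in the proof of Lemma~\ref{lem:conformal_covariance}.
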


We need that the intensity is absolutely continuous with respect to two-dimensional Lebesgue measure. In essence, that the randomness of the curve causes an averaging of the measure $\E[\mu^0]$, so that its mass is spread out over the entire $\UH$, rather than in a subset of zero Lebesgue measure. The following was proved for the measures on $\SLE_\kappa$ for $\kappa \in (4,8)$, see \cite[Lemma~3.6]{ms2022volume}, but the exact same proof works with the measure $\mu^0$ on $\SLE_4$.
\begin{lem}\label{lem:absolute_continuity_lebesgue}
The measure $\E[\mu^0]$ is absolutely continuous with respect to Lebesgue measure.
\end{lem}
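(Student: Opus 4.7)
The plan is to follow the strategy of \cite[Lemma~3.6]{ms2022volume} verbatim. The argument relies only on the scaling covariance of Lemma~\ref{lem:conformal_covariance_intensity} and on one-point estimates that have direct analogues in the $\SLE_4$ setting.

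First, by the scaling covariance $\E[\mu^0] \circ \phi_a = a^{3/2} \E[\mu^0]$, combined with the fact that $\phi_a$ scales two-dimensional Lebesgue measure by $a^2$, it suffices to establish absolute continuity of $\E[\mu^0]$ restricted to a single annular region $R = \{z \in \UH : 1 < |z| < 2\}$. Any compact subset of $\UH$ can be covered by finitely many dilates $\phi_a(R)$, and absolute continuity on each such dilate follows from absolute continuity on $R$ via the covariance. Hence the problem reduces to a compact set bounded away from $\partial\UH$.

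On $R$, the heart of the argument is a uniform one-point bound $\E[\mu^0(B(z, r))] \lesssim r^2$ for $z \in R$ and small $r > 0$. Heuristically, two factors combine: the probability that $\eta^0$ enters $B(z, r)$ is of order $r^{2 - d_4} = r^{1/2}$ by the standard one-point estimate for $\SLE_4$, while conditionally on $\eta^0$ entering, the expected mass of $\mu^0$ deposited in $B(z, r)$ is of order $r^{d_4} = r^{3/2}$ by scaling. To make this rigorous, as in the proof of Lemma~\ref{lem:moment_finite}, I would exploit the coupling with a $(2,1)$-quantum wedge from Lemma~\ref{lem:wedge_decomposition} and the $p$-th moment bounds of Lemma~\ref{lem:moment_bound_strip}, after conformally mapping $B(z, r)$ via $f_\tau$ for the first hitting time $\tau$ of $B(z, r)$ by $\eta^0$. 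With the uniform one-point bound in hand, the Lebesgue differentiation theorem for Radon measures yields that $\E[\mu^0]|_R$ is absolutely continuous with respect to two-dimensional Lebesgue measure.

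The main obstacle is this conditional mass estimate: the critical $\gamma = 2$ LQG boundary measure has infinite first moment on bounded intervals, so one cannot compute $\E[\mu^0(B(z, r))]$ directly. Instead, one combines the $p$-th moment bound ($p<1$) coming from Lemma~\ref{lem:moment_finite} with the $\SLE_4$ one-point estimate via H\"older's inequality to recover the desired $O(r^2)$ bound. This is the place where the critical case is technically more delicate than the non-simple case of \cite{ms2022volume}, though it follows the same structural outline.
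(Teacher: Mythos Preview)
Your proposal and the paper's proof both defer to \cite[Lemma~3.6]{ms2022volume}, so at the highest level they agree. The paper, however, asserts that the argument there carries over \emph{without change}, whereas you claim that the critical case $\gamma=2$ introduces an obstacle (infinite first moments of the boundary measure) that must be repaired before the one-point bound $\E[\mu^0(B(z,r))]\lesssim r^2$ can be established.

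The repair you propose---combining $p$th-moment bounds for $p<1$ from Lemma~\ref{lem:moment_finite} with the $\SLE_4$ one-point estimate ``via H\"older's inequality'' to recover an $O(r^2)$ first-moment bound---does not work as written. H\"older's inequality requires conjugate exponents at least $1$; it lets you dominate a first moment by a \emph{higher} moment times a power of a probability, not the reverse. There is no general mechanism for controlling $\E[X]$ from $\E[X^p]$ with $p<1$ together with the probability of the support event: for instance, the tail bound $\p(X>t)\le \E[X^p]/t^p$ is not integrable in $t$ when $p<1$, and log-convexity of moments goes in the wrong direction. So either the argument in \cite{ms2022volume} does not in fact proceed through a first-moment one-point bound of this type (in which case your outline misidentifies the strategy you are citing), or the obstacle is handled by some device other than H\"older; in either case the sketch as stated has a genuine gap at precisely the step you flag as delicate.
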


Next, we recall the following lemma of \cite{ms2022volume}.
\begin{lem}[Lemma~3.7 of \cite{ms2022volume}]\label{lem:density_measure}
For each $a > 0$, let $\phi_a(z) = az$. Let $m$ be a measure on $\UH$ which is absolutely continuous with respect to Lebesgue measure and satisfies
\begin{align*}
    m \circ \phi_a(dz) = a^d m(dz)
\end{align*}
for some $d > 1$ and all $a > 0$. Then there exists some function $H(z) = H(\arg z)$ such that
\begin{align*}
    m(dz) = H(\arg z) \im(z)^{d-2} dz.
\end{align*}
\end{lem}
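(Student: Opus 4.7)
The strategy is to exploit absolute continuity to turn the scaling identity for the measure $m$ into a pointwise scaling identity for its Radon--Nikodym density, and then solve that functional equation in polar coordinates.

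First, by absolute continuity, write $m(dz) = f(z) \, dz$ for some non-negative measurable $f$. For any Borel set $A \subset \UH$ and any $a > 0$, a change of variables $z = aw$ gives
\begin{align*}
    m \circ \phi_a(A) = m(aA) = \int_{aA} f(z) \, dz = \int_A f(aw) \, a^2 \, dw,
\end{align*}
so that $m \circ \phi_a(dz) = a^2 f(az) \, dz$. Comparing with $a^d m(dz) = a^d f(z) \, dz$ and using the uniqueness of Radon--Nikodym densities, one obtains the scaling identity
\begin{align*}
    f(az) = a^{d-2} f(z) \qquad \text{for Lebesgue a.e.\ } z \in \UH,
\end{align*}
for each fixed $a > 0$. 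By Fubini, one can then choose a representative of $f$ for which this identity holds for a.e.\ $(a,z) \in (0,\infty) \times \UH$, and hence redefine $f$ on a null set so that it becomes $(d-2)$-homogeneous everywhere, i.e.\ $f(az) = a^{d-2} f(z)$ for all $a>0$ and all $z \in \UH$.

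Next, for each $z = r e^{i\theta} \in \UH$, take $a = 1/r$ to get $f(e^{i\theta}) = r^{2-d} f(z)$, i.e.\ $f(z) = |z|^{d-2} f(e^{i \arg z})$. Defining $\widetilde H(\theta) = f(e^{i\theta})$, which depends only on $\arg z$, and using $|z| = \im(z)/\sin(\arg z)$, we find
\begin{align*}
    f(z) = \im(z)^{d-2} \, \sin(\arg z)^{2-d} \, \widetilde H(\arg z).
\end{align*}
Setting $H(\theta) = \sin(\theta)^{2-d} \, \widetilde H(\theta)$, which is again a function of $\arg z$ only, yields the claimed representation $m(dz) = H(\arg z) \, \im(z)^{d-2} \, dz$.

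The only genuine subtlety is the passage from the a.e.\ scaling to an everywhere-scaling representative of $f$; this is a standard Fubini argument and constitutes the main technical step, but it is routine. The rest is the elementary resolution of a homogeneity equation in polar coordinates.
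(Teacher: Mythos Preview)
The paper does not actually prove this lemma; it is quoted verbatim as Lemma~3.7 of \cite{ms2022volume} and used as a black box. So there is no ``paper's own proof'' to compare against here.

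Your argument is correct and is the natural one: pass to the Radon--Nikodym density, turn the measure scaling into the pointwise relation $f(az)=a^{d-2}f(z)$, and then read off that $g(z)=|z|^{2-d}f(z)$ depends only on $\arg z$. The one place where you are a touch glib is the sentence ``redefine $f$ on a null set so that it becomes $(d-2)$-homogeneous everywhere.'' What Fubini actually gives you is that for a.e.\ $\theta$, the map $r\mapsto g(re^{i\theta})$ satisfies $g(ar e^{i\theta})=g(re^{i\theta})$ for a.e.\ $(a,r)$; the change of variables $(a,r)\mapsto(ar,r)$ then shows this function of $r$ is a.e.\ constant, say $H_0(\theta)$, and hence $f(z)=|z|^{d-2}H_0(\arg z)$ Lebesgue-a.e. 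That is all you need (densities are only defined a.e.), and it is indeed routine, but the intermediate step is the a.e.\ constancy of a dilation-invariant function of $r$, not an everywhere-homogeneous representative obtained directly. With that clarification your proof is complete.
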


By Lemmas~\ref{lem:conformal_covariance_intensity} and~\ref{lem:absolute_continuity_lebesgue} the conditions of Lemma~\ref{lem:density_measure} are satisfied for $\E[\mu^0]$. Next, we note the form of $H$ in the case of $m = \E[\mu^0]$.
\begin{lem}\label{lem:density_intensity}
There is some constant $c > 0$ such that
\begin{align*}
    \E[\mu^0](dz) = c \sin(\arg z) \im(z)^{-1/2} dz.
\end{align*}
\end{lem}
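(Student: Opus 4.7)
The plan is to use the conformal covariance from Lemma~\ref{lem:conformal_covariance} to construct, for each Borel set $B \Subset \UH$, an $\F_s$-martingale whose Itô analysis yields a second-order PDE for the density $G(z) \coloneqq H(\arg z)\im(z)^{-1/2}$. Reducing the PDE to an ODE in $\theta = \arg z$ and using the positivity of $\mu^0$, we will pin down $H$ as a positive multiple of $\sin\theta$.

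Fix a Borel set $B \Subset \UH$ and set $\F_s = \sigma(\eta^0|_{[0,s]})$. Lemma~\ref{lem:conformal_covariance}, combined with the independence of $\wt{\mu}^s$ from $\F_s$ (since $\wt{h}^s$ is independent of $\eta^0$ and, by the domain Markov property, $\eta^s$ is independent of $\F_s$), the identity $\wt{\mu}^s \overset{d}{=} \mu^0$, and the change of variables~\eqref{eq:inverse_derivative}, yields
\begin{align*}
    \E\!\left[ \mu^0|_{\eta^0([s,\infty))}(B) \,\middle|\, \F_s \right] = \int_B |f_s'(w)|^{1/2} G(f_s(w))\, dw.
\end{align*}
Hence $N_s \coloneqq \E[\mu^0(B) \mid \F_s] = \mu^0|_{\eta^0([0,s])}(B) + \int_B M_s(w)\, dw$ is an $\F_s$-martingale with $M_s(w) \coloneqq |f_s'(w)|^{1/2} G(f_s(w))$.

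Apply Itô's formula to $M_s(w)$ using the Loewner SDE $df_s(w) = (2/f_s(w))\,ds - 2\,dB_s$ (valid for $\kappa = 4$) and the identity $\partial_s \log|f_s'(w)| = -2\re(f_s(w)^{-2})$. The drift of $M_s(w)$ equals $|f_s'(w)|^{1/2}(\mathcal{L}G)(f_s(w))$, where for $z = x + iy$,
\begin{align*}
    (\mathcal{L}G)(z) = 2\partial_{xx} G(z) + \frac{2x}{|z|^2}\partial_x G(z) - \frac{2y}{|z|^2}\partial_y G(z) - \frac{x^2-y^2}{|z|^4}G(z).
\end{align*}
Taking expectations in the martingale identity for $N_s$, one gets $\E[\int_B M_s(w)\, dw] = \int_B G(w)\, dw - \E[\mu^0|_{\eta^0([0,s])}(B)]$. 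Differentiating at $s = 0^+$ and swapping integration and differentiation (Fubini) yields $\int_B(\mathcal{L}G)(w)\,dw = -\partial_s^+\E[\mu^0|_{\eta^0([0,s])}(B)]\big|_{s=0}$. For $B$ at positive distance from $0$, the bound $\E[\mu^0|_{\eta^0([0,s])}(B)] \leq \E[\mu^0(B)\,\one\{\tau_B \leq s\}]$, with $\tau_B = \inf\{s \colon \eta^0(s) \in B\}$, together with the Brownian scaling $\eta^0(s) \overset{d}{=} \sqrt{s}\,\eta^0(1)$, forces $\p[\tau_B \leq s] = o(s)$ as $s \to 0$. Hence $\int_B(\mathcal{L}G)(w)\,dw = 0$ for every such $B$, and by continuity $\mathcal{L}G \equiv 0$ on $\UH$.

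A direct substitution of $G(z) = H(\arg z)\im(z)^{-1/2}$ into $\mathcal{L}G = 0$ (computing $\partial_x G$, $\partial_y G$ and $\partial_{xx} G$ in terms of $H$ using $z = re^{i\theta}$) reduces the PDE to the ODE $H''(\theta) + H(\theta) = 0$ on $(0,\pi)$, whose general solution is $H(\theta) = c_1\sin\theta + c_2\cos\theta$. Non-negativity of $H$ on $(0,\pi)$ (from $\mu^0 \geq 0$) forces $c_2 = 0$ (otherwise $H$ would change sign) and $c_1 \geq 0$; non-triviality of $\mu^0$ excludes $c_1 = 0$. The main obstacle will be the Fubini/Itô interchange needed to pass from the integrated identity to the pointwise vanishing of $\mathcal{L}G$: this requires uniform integrability bounds on $M_s(w)$ over compact sets of $w$ and $s$, which should follow from the moment bounds established in Section~\ref{sec:local_finiteness_UH} together with the smoothness of $G$ on $\UH$.
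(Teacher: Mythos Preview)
Your strategy---extracting a martingale from Lemma~\ref{lem:conformal_covariance}, computing its It\^o drift to obtain a second-order operator $\mathcal{L}$, reducing $\mathcal{L}G=0$ to the angular ODE $H''+H=0$, and solving---is the standard route (and your computation of $\mathcal{L}$ and of the ODE reduction are correct). It is essentially what the paper's cited argument in \cite{ms2022volume} does. Two specific points need repair, however.

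First, the claim that scaling ``forces $\p[\tau_B\le s]=o(s)$'' is false: by Brownian scaling and the one-arm estimate~\eqref{eq:arm_exponent}, $\p[\tau_B\le s]=\p\bigl[\sup_{t\le 1}|\eta^0(t)|\ge \dist(0,B)/\sqrt{s}\bigr]\asymp s$, so it is $\Theta(s)$. Even granting $o(s)$, bounding $\E[\mu^0(B)\,\one\{\tau_B\le s\}]$ would require an $L^q$ bound on $\mu^0(B)$ for some $q>1$, which has not been established. The clean fix is to note that $\mu^0|_{\eta^0([0,s])}$ is $\F_s$-measurable (this is implicit in your formula for $N_s$) and vanishes on $B$ identically for $s<\tau_B$; hence $s\mapsto\int_B M_s(w)\,dw$ is itself a martingale on $[0,\tau_B)$, its drift must vanish there, and evaluating at $s=0$ gives $\int_B(\mathcal{L}G)(w)\,dw=0$ with no asymptotic estimate needed.

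Second, applying It\^o's formula to $M_s(w)=|f_s'(w)|^{1/2}G(f_s(w))$ presupposes $G\in C^2$, but Lemma~\ref{lem:density_measure} only gives $G(z)=H(\arg z)\,\im(z)^{-1/2}$ with $H$ a priori merely measurable. You need either a regularity step (the martingale property makes $G$ a weak solution, and one-dimensional regularity for the reduced equation then upgrades $H$ to $C^2$) or to carry the argument through in weak form. This is routine but should not be skipped.
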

\begin{proof}
This is proved exactly in the same way as \cite[Lemma~3.8]{ms2022volume}.
\end{proof}

We wrap up this section by deducing the following. The proof is the same as that of \cite[Lemma~3.9]{ms2022volume}, but it is very short so we repeat it here.
\begin{lem}\label{lem:local_finiteness_eta}
Almost surely, $\mu^0$ is locally finite. That is, almost surely,
\begin{align*}
    \mu^0(\eta([s,t])) < \infty \quad \text{for all} \quad 0 < s < t.
\end{align*}
\end{lem}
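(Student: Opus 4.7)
The plan is to leverage Lemma~\ref{lem:local_finiteness_UH} directly, using the crucial fact that for $\kappa = 4$ the curve $\eta$ is almost surely simple and avoids $\R \setminus \{0\}$, so that $\eta([s,t])$ is a compact subset of the \emph{open} upper half-plane for every $0 < s < t$. Once this geometric input is isolated, the rest is just a countable-exhaustion argument to interchange ``a.s.\ for each compact $K$'' with ``a.s.\ for all compact $K$ simultaneously''.

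Concretely, I would proceed as follows. First, choose a countable family of compact sets $(K_n)_{n \geq 1} \subset \UH$ that exhausts $\UH$, e.g.\ $K_n = \{z \in \UH \colon |z| \leq n,\ \im(z) \geq 1/n\}$. By Lemma~\ref{lem:local_finiteness_UH}, for each fixed $n$ the event $\{\mu^0(K_n) < \infty\}$ has probability one, so their countable intersection $\Omega_1$ also has probability one, and on $\Omega_1$ the measure $\mu^0$ is locally finite as a measure on $\UH$ (any compact $K \subset \UH$ has positive distance to $\R$ and is bounded, hence contained in some $K_n$). Second, on the full-probability event $\Omega_2$ on which $\eta$ is a continuous simple curve with $\eta((0,\infty)) \subset \UH$ and $\eta(t) \to \infty$ as $t \to \infty$ (see Section~\ref{sec:SLE}), for any $0 < s < t$ the image $\eta([s,t])$ is compact and disjoint from $\R$, hence a compact subset of $\UH$.

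Putting these two pieces together on $\Omega_1 \cap \Omega_2$, which still has full probability, for every $0 < s < t$ there exists $n$ with $\eta([s,t]) \subset K_n$, and therefore
\begin{align*}
    \mu^0(\eta([s,t])) \leq \mu^0(K_n) < \infty.
\end{align*}
This is the desired conclusion. There is no real obstacle here beyond invoking the correct fact about $\SLE_4$ (simplicity and the fact that the curve only touches $\R$ at its starting point); the argument is a direct transcription of the proof of \cite[Lemma~3.9]{ms2022volume}, with the only change being that for $\kappa = 4$ one still has $\eta((0,\infty)) \subset \UH$ a.s., so the same geometric reduction applies.
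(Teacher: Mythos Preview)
Your proof is correct but takes a genuinely different route from the paper's. The paper argues via the explicit intensity formula of Lemma~\ref{lem:density_intensity}: integrating $c\sin(\arg z)\,\im(z)^{-1/2}$ over $B(0,R)\cap\UH$ gives $\E[\mu^0(B(0,R))]<\infty$, whence $\mu^0(B(0,R))<\infty$ a.s., and then one uses that $\eta([s,t])$ is a.s.\ bounded. This is exactly the argument from \cite{ms2022volume}, where it is essential because for $\kappa\in(4,8)$ the curve hits $\R$ and $\eta([s,t])$ need not stay at positive distance from the boundary. Your approach instead exploits that for $\kappa=4$ the curve is simple with $\eta((0,\infty))\subset\UH$ a.s., so $\eta([s,t])$ is already a compact subset of the open half-plane and Lemma~\ref{lem:local_finiteness_UH} applies directly; the countable exhaustion then upgrades this to a statement holding simultaneously for all $0<s<t$. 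This is more elementary in the present setting---it bypasses Lemmas~\ref{lem:conformal_covariance_intensity}--\ref{lem:density_intensity} entirely for the purpose of local finiteness---but it is specific to the simple phase $\kappa\le 4$ and would fail for $\kappa\in(4,8)$. One minor correction: your argument is \emph{not} a transcription of \cite[Lemma~3.9]{ms2022volume} (that is precisely the intensity argument the paper reproduces); rather, it is a shortcut available only because $\SLE_4$ avoids the boundary.
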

\begin{proof}
Note that by Lemma~\ref{lem:density_intensity}, $\E[ \mu^0(B(0,R))] < \infty$ for each $R>0$, so that a.s., $\mu^0(B(0,R))$ is finite. Moreover, for each $0 \leq s \leq t$, we have that $\p( \eta([s,t]) \subset B(0,R)) \to 1$ as $R \to \infty$. Finally, since $\mu^0(\eta([s,t])) \leq \mu^0(B(0,R))$ on the event $\{ \eta([s,t]) \subset B(0,R) \}$, it follows that for any $0 \leq s \leq t$, $\mu([s,t])$ is a.s. finite.
\end{proof}

\begin{proof}[Proof of Theorem~\ref{thm:mainresult}]
By Lemmas~\ref{lem:conformal_covariance} and~\ref{lem:local_finiteness_eta} and Theorem~\ref{thm:uniqueness_natural_parametrisation} the conclusion of the theorem holds.
\end{proof}

\bibliographystyle{abbrv}
\bibliography{biblio}

\end{document}